\documentclass[12pt,reqno]{amsart}
\usepackage[headings]{fullpage}
\usepackage{amssymb,bbm}
\usepackage{graphicx}
\usepackage{texdraw}
\usepackage{url}
\usepackage[all]{xy}
\usepackage{float}   
\usepackage[bookmarks=true,%
    colorlinks=true,%
    linkcolor=blue,%
    citecolor=blue,%
    filecolor=blue,%
    menucolor=blue,%
    urlcolor=blue,%
    breaklinks=true]{hyperref}

\usepackage{multirow}
\usepackage{mathtools}
\usepackage{xcolor}
\usepackage{comment}
\usepackage{tikz,tikz-cd}
\usetikzlibrary{knots, hobby, decorations.pathreplacing,
shapes.geometric, calc}

\newtheorem{theorem}{Theorem}

\theoremstyle{definition}

\newtheorem{lemma}[theorem]{Lemma}




\def\BN{\mathbbm N}
\def\BZ{\mathbbm Z}
\def\BQ{\mathbbm Q}
\def\BR{\mathbbm R}

\def\BC{\mathbbm C}

\def\calE{\mathcal E}

\def\s{\sigma}

\def\SL{\mathrm{SL}}

\def\tq{\tilde{q}}
\def\Res{\mathrm{Res}}
\def\Vol{\mathrm{Vol}}

\def\a{\alpha}

\def\={\;=\;}
\def\+{\,+\,}
 
\def\-{\,-\,}

\def\be{\begin{equation}}
\def\ee{\end{equation}}

\def\Li{\mathrm{Li}}

\def\Im{\mathrm{Im}}


\def\eu{e}
\def\ramuno{i}

\newcommand{\dif}{\mathop{}\!\mathrm{d}}

\makeatletter
\newcommand{\spx}[1]{%
  \if\relax\detokenize{#1}\relax
    \expandafter\@gobble
  \else
    \expandafter\@firstofone
  \fi
  {^{#1}}%
}
\makeatother

\newcommand{\od}[3][]{\frac{\dif\spx{#1}#2}{\dif#3\spx{#1}}}

\tikzset{
  knot diagram/every strand/.append style={
    ultra thick,
    red
  },
  show curve controls/.style={
    postaction=decorate,
    decoration={show path construction,
      curveto code={
        \draw [blue, dashed]
        (\tikzinputsegmentfirst) -- (\tikzinputsegmentsupporta)
        node [at end, draw, solid, red, inner sep=2pt]{};
        \draw [blue, dashed]
        (\tikzinputsegmentsupportb) -- (\tikzinputsegmentlast)
        node [at start, draw, solid, red, inner sep=2pt]{}
        node [at end, fill, blue, ellipse, inner sep=2pt]{}
        ;
      }
    }
  },
  show curve endpoints/.style={
    postaction=decorate,
    decoration={show path construction,
      curveto code={
        \node [fill, blue, ellipse, inner sep=2pt] at (\tikzinputsegmentlast) {}
        ;
      }
    }
  }
}



\makeatletter

\renewcommand\thepart{\@Roman\c@part}%
\renewcommand\part{%
   \if@noskipsec \leavevmode \fi
   \par
   \addvspace{6.7ex}%
   \@afterindentfalse
   \secdef\@part\@spart}
\def\@part[#1]#2{%
    \ifnum \c@secnumdepth >\m@ne
      \refstepcounter{part}%
      \addcontentsline{toc}{part}{Part~\thepart.\ #1}%
    \else
      \addcontentsline{toc}{part}{#1}%
    \fi
    {\parindent \z@ \raggedright
     \interlinepenalty \@M
     \normalfont
     \ifnum \c@secnumdepth >\m@ne
       \centering\large\scshape \partname~\thepart.%
       \hspace{1ex}%
     \fi%
     \large\scshape #2%
     \markboth{}{}\par}%
    \nobreak
    \vskip 4.7ex
    \@afterheading}
  \def\@spart#1{
  \refstepcounter{part}%
  \addcontentsline{toc}{part}{#1}%
    {\parindent \z@ \raggedright
     \interlinepenalty \@M
     \normalfont
     \centering\large\scshape #1\par}%
     \nobreak
     \vskip 4.7ex
     \@afterheading}
\renewcommand*\l@part[2]{%
  \ifnum \c@tocdepth >-2\relax
    \addpenalty\@secpenalty
    \addvspace{0.75em \@plus\p@}%
    \begingroup
      \parindent \z@ \rightskip \@pnumwidth
      \parfillskip -\@pnumwidth
      {\leavevmode
       \normalsize \bfseries #1\hfil \hb@xt@\@pnumwidth{\hss #2}}\par
       \nobreak
       \if@compatibility
         \global\@nobreaktrue
         \everypar{\global\@nobreakfalse\everypar{}}%
      \fi
    \endgroup
  \fi}

\def\l@subsection{\@tocline{2}{0pt}{2pc}{6pc}{}}
\makeatother



\begin{document}
\title[Algebraic aspects of holomorphic quantum modular forms]{
  Algebraic aspects of holomorphic quantum modular forms}

\author{Ni An}
\address{
  Department of Mathematics \\
  Southern University of Science and Technology \\
  Shenzhen, China}
\email{2315687238@qq.com}
\author{Stavros Garoufalidis}
\address{
  International Center for Mathematics, Department of Mathematics \\
  Southern University of Science and Technology \\
  Shenzhen, China}
\email{stavros@mpim-bonn.mpg.de}
\author{Shana Yunsheng Li}
\address{
  International Center for Mathematics, Department of Mathematics \\
  Southern University of Science and Technology \\
  Shenzhen, China \newline
  {\tt \url{https://li-yunsheng.github.io}}}
\email{yl202@illinois.edu}

\thanks{
  {\em Keywords and phrases}: knots, links, 3-manifolds, Jones polynomial,
  hyperbolic 3-manifolds, Kauffman bracket, skein theory, Witten's Conjecture,
  Habiro ring, Quantum Modularity Conjecture, Volume Conjecture, Faddeev quantum
  dilogarithm, state integrals, holomorphic quantum modular forms.   
}

\date{11 July 2024}

\begin{abstract}
  Matrix-valued holomorphic quantum modular forms are intricate objects
  associated to 3-manifolds (in particular to knot complements) that arise
in successive refinements of the Volume Conjecture of knots and involve three
holomorphic, asymptotic and arithmetic realizations. It is expected that the algebraic
properties of these objects can be deduced from the algebraic properties of descendant
state integrals, and we illustrate this for the case of the $(-2,3,7)$-pretzel knot.
\end{abstract}

\maketitle

{\footnotesize
\tableofcontents
}


\section{Introduction}
\label{sec.intro}

The Volume Conjecture of Kashaev links the asymptotics of the Jones polynomial
of a hyperbolic knot and its parallels with the hyperbolic geometry of the
knot complement~\cite{K97}. Explicitly, the conjecture (combined with the results
of Murakami--Murakami~\cite{MM}) asserts that for a
hyperbolic knot $K$ in 3-space~\cite{Th}, we have
\be
\lim_{N \to \infty} \frac{1}{N} \log | J^K_N(e^{2 \pi i/N})|
= \frac{\Vol(S^3\setminus K)}{2 \pi}
\ee
where $J^K_N(q) \in \BZ[q^{\pm 1}]$ is the Jones polynomial of $K$, colored with
the $N$-dimensional irreducible representation of $\mathfrak{sl}_2(\BC)$, and
normalized to be $1$ for the unknot. The definition of the colored Jones polynomial,
that we omit, may be found in~\cite{RT:ribbon}

The Volume Conjecture is considered one of the main problems of quantum topology.
Although it is currently known only for a handful of knots, it can be
strengthened in numerous ways to include a statement about asymptotics to all orders
in $N$, and with exponentially small terms included. One of these successive
refinements 
of the quantum modularity conjecture for the Kashaev invariant of a knot lead to the
concept of a matrix-valued holomorphic quantum modular forms introduced and studied
in~\cite{GZ:kashaev,GZ:qseries}. The latter are rather intricate objects that involve
matrices of 
\begin{itemize}
\item
  holomorphic objects, that is $q$-series with integer coefficients convergent when
  $|q| \neq 1$.
\item
  asymptotic/analytic objects, that is factorially divergent formal
  power series that are Borel resummable and whose Stokes phenomenon is explained
  in terms of the $q$-series above.
\item
  arithmetic objects, that is collections of functions defined near each
  complex root of unity that arithmetically determine each other $p$-adically.
\end{itemize}

This sounds like a daunting collection of objects (associated for example to knots)
that come from different worlds
and are somehow stitched together. Despite this, it turns out that matrix-valued
holomorphic quantum modular forms have algebraic aspects that can be formulated
and proven using the algebraic properties of a variant of the Andersen--Kashaev
invariants~\cite{AK}, namely the descendant state integrals.

The three simplest hyperbolic knots are the $4_1$ knot, the $5_2$ knot and the
$(-2,3,7)$ pretzel knot~\cite{Th,snappy} shown in Figure~\ref{f.3knots}.

\begin{figure}[htpb!]
\centering
\scalebox{0.6}{
\begin{tikzpicture}[use Hobby shortcut]
\begin{knot}[
      consider self intersections=true,
      ignore endpoint intersections=false,
      flip crossing=4,
      only when rendering/.style={
      }
    ]
    \strand[black] ([closed]0,0) .. (-.8,0.8) .. (.8,2.4) .. (-.8,4)
    .. (-2.4,2.4) .. (0,.8) .. (2.4,2.4) .. (0.8,4) .. (-.8,2.4)
    .. (.8,0.8) .. (0,0);
\end{knot}
\end{tikzpicture}
}
\qquad
\scalebox{0.6}{
\begin{tikzpicture}[use Hobby shortcut]
\begin{knot}[
      consider self intersections=true,
      ignore endpoint intersections=false,
      flip crossing/.list={6,4,2},
      only when rendering/.style={
        }
      ]
    \strand[black] ([closed]1.6,1.6) .. (1.4,0) .. (-1.8,-.8)
    .. (.4,.8) .. (-1.6,1.6) .. (-1.4,0) .. (1.8,-.8) .. (-.4,.8) .. (1.6,1.6);
\end{knot}
\end{tikzpicture}
}
\qquad
\scalebox{0.6}{
\begin{tikzpicture}[use Hobby shortcut]
\begin{knot}[
            consider self intersections=true,
            ignore endpoint intersections=false,
            flip crossing/.list={3,5,7,9,11,14},
            only when rendering/.style={
              }
            ]
     \strand[black] ([closed]0,.2) .. (2,-.2) .. (4,0) .. (4.2,.2)
     .. (3.6,.6) .. (4.2,1.2) .. (3.6,1.8) .. (4.2,2.4) .. (3.6,3) .. (4.2,3.6)
     .. (3.6,4) .. (3,3.8) .. (3,3) .. (2.4,2.2) .. (3,1.4) .. (2.4,.8)
     .. (.4,.8) .. (-.2,2) .. (.4,3) .. (2.4,3) .. (3,2.2) .. (2.4,1.4)
     .. (3,.8) .. (3,.4) .. (3.6,.2) .. (4.2,.6) .. (3.6,1.2) .. (4.2,1.8)
     .. (3.6,2.4) .. (4.2,3) .. (3.6,3.6).. (4.2,3.8) .. (4,4.2) .. (2,4.4)
     .. (0,4) .. (-.2,3) .. (.4,2) .. (-.2,.8) ..(0,.2);
\end{knot}
\end{tikzpicture}
}
\caption{The three simplest hyperbolic knots from left to right: $4_1$, $5_2$
  and the $(-2,3,7)$-pretzel knot.}
\label{f.3knots}
\end{figure}
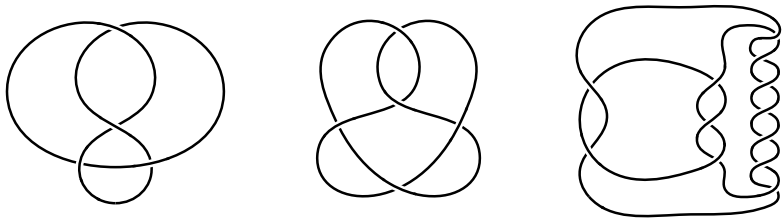

The corresponding matrices of the $4_1$ and the $5_2$ knot are $2 \times 2$ and
$3 \times 3$ due to the fact that all boundary-parabolic $\SL_2(\BC)$-representations
are conjugates of the geometric representation which is defined over $\BQ(\sqrt{-3})$
and over the cubic field of discriminant $-23$. The
holomorphic quantum modular forms of the first two were studied in detail in
~\cite{GZ:kashaev,GZ:qseries} as well as
in~\cite{GGM:resurgent,GGM:peacock,GGMW:trivial}. 

On the other hand, it was pointed out in~\cite[Sec.2.1]{GZ:kashaev} that the
corresponding matrices associated to the $(-2,3,7)$ pretzel knot are $6 \times 6$ due
to the fact that the geometric representation is defined over the same number field
as for $5_2$, but in addition to that there are three additional 
boundary-parabolic $\SL_2(\BC)$-representations defined over $\BQ(2\cos(2\pi/7))$.
In this paper we will study the algebraic aspects of the holomorphic quantum
modular forms associated to the $(-2,3,7)$-pretzel knot completing the partial
work of~\cite{GZ:kashaev,GZ:qseries}. Explicitly, we will show that

\begin{itemize}
\item[(a)]
  The factorization of the descendant state integral defines a $6 \times 6$
  matrix of (deformed) $q$-hypergeometric series; 
  see Theorem~\ref{thm.1}. 
\item[(b)]
  The matrix is a fundamental solution of a self-dual linear
  $q$-difference equation; see Theorem~\ref{thm.2}.
\item[(c)]
  The corresponding cocycle is a holomorphic function that extends from
  $\tau \in \BC\setminus \BR$ to the cut-plane $\BC'=\BC\setminus (-\infty,0]$;
  see Theorem~\ref{thm.4}.
\item[(d)]
  The stationary phase of the descendant state integral determines a
  $6 \times 6$ matrix of asymptotic series; see Theorem~\ref{thm.5}.  
\end{itemize}

Along with the detailed definitions and proofs, we will give an explanation of
why the proofs work from first principles. 


\section{The descendant state integral}
\label{sec.state}

\subsection{The state integral}

A key role in our paper is the state integral invariant of 3-dimensional
manifolds with torus boundary (in particular knot complements) introduced by
Andersen--Kashaev~\cite{AK}. This is a multi-dimensional 
integral whose integrand is a product of the Faddeev quantum
dilogarithm $\Phi_b(x)$~\cite{Faddeev} times an exponential of a quadratic form,
assembled out of a suitable triangulation of the manifold.


In the case of the $(-2,3,7)$-pretzel knot, the Andersen--Kashaev state integral
is a four-dimensional state integral that can be reduced to the following
one-dimensional state integral as shown in~\cite[Eqn.(58)]{GK:evaluation}:

\begin{equation}
\label{Zdef}  
Z_{(-2,3,7)}(\tau) \= 
\int_{\mathbb{R}+\ramuno \frac{c_b}{2}+\ramuno\varepsilon}
\Phi_{\sqrt{\tau}}(x)^2\Phi_{\sqrt{\tau}}(2x-c_b)
\eu^{-\pi \ramuno(2x-c_b)^2}\dif x 
\end{equation}
where $\tau=b^2$ and $c_b=\ramuno(b+b^{-1})/2$.
Two key properties of this absolutely convergent state integral are that
\begin{itemize}
\item[(a)]
  it defines a holomorphic function on the cut-plane
  $\BC' = \BC\setminus (-\infty,0]$, and
\item[(b)]
  when $\tau \in \BC\setminus\BR$, it can be factorized as a sum of products of
  $q$-series and $\tq$-series where $q = \eu^{2\pi \ramuno \tau}$
  and $\tilde{q} = \eu^{-2\pi \ramuno/\tau}$.
\end{itemize}
This factorization property is expected to hold for all state integrals that
appear naturally in complex Chern--Simons theory of knots and 3-manifolds as explained
in~\cite{holomorphic-blocks,GK:qseries} and in the case of the above state integral,
it was given in~\cite[Prop.12]{GZ:qseries}.

The main reason behind this factorization is algebraic and follows from the
quasi-periodicity of the Faddeev quantum dilogarithm which implies that it is
a meromorphic function with poles in the lattice points of a two-dimensional cone
and prescribed residues. Upon applying the residue theorem, the sum over the cone
(which is coupled by the exponential of a quadratic form with integer coefficients)
decouples due to the fact that $e^{2 \pi i k}=1$ for all integers $k$.
  
\subsection{The descendant state integral and its $q$-series}

A descendant version of the state integral~\eqref{Zdef}
obtained by inserting in the integrand of~\eqref{Zdef} the exponential of a linear
form in two integer variables $\lambda,\lambda' \in \mathbb{Z}$, in an analogous
way as was done for the $4_1$ and $5_2$ knots in~\cite[Sec.4.3]{GGM:resurgent}.
The descendant state integral of $(-2,3,7)$ pretzel knot is

\begin{equation}
\label{Zdesc}  
Z_{(-2,3,7)}^{(\lambda,\lambda' )}(\tau)
\= \int_{\mathbb{R}+\ramuno \frac{c_b}{2}+\ramuno\varepsilon}
\Phi_{\sqrt{\tau}}(x)^2\Phi_{\sqrt{\tau}}(2x-c_b)
\eu^{-\pi \ramuno(2x-c_b)^2 +2\pi (\lambda b-\lambda'  b^{-1})x}\dif x \,.
\end{equation}

This integral can be factorized as a finite sum of products of $q$-series and
$\tq$-series for the same reason that the integral~\eqref{Zdef} does. 
Deforming the contour of integration upwards, applying the residue theorem,
collecting the residues and observing the same decoupling as was done
in~\cite{GK:qseries,GK:evaluation}, we obtain the factorization into $q$-series.

\begin{theorem}
\label{thm.1}
For all $\tau \in \BC\setminus\BR$, we have
\be
\label{eq:q-series}
\begin{aligned}
2\eu^{\frac{\pi \ramuno}{4}} &
q^{-\frac{\lambda}{2}}\tilde{q}^{-\frac{\lambda' }{2}}
Z^{(\lambda,\lambda' )}_{(-2,3,7)}(\tau) \\ 
& \hspace{-1.5cm} = -\frac{1}{2\tau} h_{\lambda,0}(\tau)h_{\lambda',2}(\tau^{-1})
+h_{\lambda,1}(\tau)h_{\lambda',1}(\tau^{-1})
- \frac{\tau}{2}h_{\lambda,2}(\tau)h_{\lambda',0}(\tau^{-1})
\\ 
&\hspace{-1.5cm} -\ramuno \Big(\frac{1}{2}h_{\lambda,3}(\tau)h_{\lambda',4}(\tau^{-1})
-\frac{1}{2}h_{\lambda,4}(\tau)h_{\lambda',3}(\tau^{-1})
+h_{\lambda,5}(\tau)h_{\lambda',5}(\tau^{-1}) \Big) \,.
\end{aligned}
\ee
\end{theorem}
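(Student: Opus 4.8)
The plan is to read off both sides of~\eqref{eq:q-series} as residue sums of the integrand of~\eqref{Zdesc}; the whole content is the observation that these sums decouple into a ``$b$-sum'' times a ``$b^{-1}$-sum'', where $b=\sqrt{\tau}$. I would work with $\Im\,\tau>0$, so that both $q=\eu^{2\pi\ramuno\tau}$ and $\tq=\eu^{-2\pi\ramuno/\tau}$ have modulus $<1$ and the relevant $q$- and $\tq$-series converge; the case $\Im\,\tau<0$ then follows from a reflection symmetry of both sides, exactly as for the non-descendant identity of~\cite{GZ:qseries}. Recall from~\cite{Faddeev} that $\Phi_{\sqrt{\tau}}$ is meromorphic on $\BC$, with simple poles on the cone $\mathcal C=\{c_b+\ramuno(mb+nb^{-1}):m,n\in\BZ_{\ge 0}\}$ and simple zeros on the opposite cone, that its residues factor, up to a root of unity, as a function of $(m,q)$ times a function of $(n,\tq)$, and that the $b$- and $b^{-1}$-logarithmic quasi-periodicities give the subleading Laurent coefficients at each pole the same separated form (a sum of products of an $m$-function of $q$ with an $n$-function of $\tq$). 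Consequently the poles of the integrand of~\eqref{Zdesc} above the contour are: double poles from $\Phi_{\sqrt{\tau}}(x)^2$ on $\mathcal C$; simple poles from $\Phi_{\sqrt{\tau}}(2x-c_b)$ on the half-cone $\tfrac12\mathcal C=\{c_b+\tfrac{\ramuno}{2}(pb+qb^{-1}):p,q\in\BZ_{\ge 0}\}$; and, where these coincide (i.e.\ $p,q$ both even), triple poles. The zeros of the two $\Phi$-factors lie below the contour and cancel nothing.

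Next I would deform the contour to $+\ramuno\infty$. Using the asymptotics of $\Phi_{\sqrt{\tau}}$, the Gaussian $\eu^{-\pi\ramuno(2x-c_b)^2}$ and the linear exponential, and after the standard tilting of the contour near infinity used in~\cite{GK:evaluation,GK:qseries,GZ:qseries}, the integral over the shifted contour tends to $0$, so that $Z^{(\lambda,\lambda')}_{(-2,3,7)}(\tau)$ is, up to sign, $2\pi\ramuno$ times the sum of the residues at the poles crossed. These split into the triple poles at $c_b+\ramuno(mb+nb^{-1})$, $(m,n)\in\BZ_{\ge0}^2$, and the remaining simple poles at $c_b+\tfrac{\ramuno}{2}(pb+qb^{-1})$ with $(p,q)\not\equiv(0,0)\bmod 2$. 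A simple-pole residue is immediate. For a triple pole at $x_0$ I would write $\Phi_{\sqrt{\tau}}(x_0+\varepsilon)^2=\varepsilon^{-2}R_{m,n}^2(1+\cdots)$ and $\Phi_{\sqrt{\tau}}(2(x_0+\varepsilon)-c_b)=(2\varepsilon)^{-1}R_{2m,2n}(1+\cdots)$, multiply by the Taylor expansion of the entire factor $G(x)=\eu^{-\pi\ramuno(2x-c_b)^2+2\pi(\lambda b-\lambda'b^{-1})x}$, and take the coefficient of $\varepsilon^{-1}$; since $\tfrac{d^2}{dx^2}\log G=-8\pi\ramuno$ is constant, only $\log G(x_0)$, the derivative $\tfrac{d}{dx}\log G(x_0)$ (a $\BZ$-linear combination of $b$ and $b^{-1}$ with coefficients depending on $m,n,\lambda,\lambda'$), this constant, and the first two subleading Laurent coefficients of the two $\Phi$-factors enter.

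The heart of the matter is the decoupling. Evaluating at $x_0=c_b+\ramuno(mb+nb^{-1})$: the Gaussian equals $\ramuno$ times a power of $q$ depending on $m$ times a power of $\tq$ depending on $n$, because the cross term of the quadratic form contributes to the exponent only integer multiples of $2\pi\ramuno$ and $\eu^{2\pi\ramuno k}=1$ (this is also the source of the fractional $\tfrac12$-shifts in the $q,\tq$-exponents, hence of the $\eu^{\pi\ramuno/4}$ on the left); the linear exponential separates likewise and produces the monomial $q^{-\lambda/2}\tq^{-\lambda'/2}$; the residues $R_{m,n},R_{2m,2n}$ factor as recalled; and the subleading Laurent coefficients are separated by construction. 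Hence each triple-pole residue, distributed by the $\tau^1,\tau^0,\tau^{-1}$ produced by the quadratic form, is a sum of products of an $m$-function of $(q,\lambda)$ with an $n$-function of $(\tq,\lambda')$, and summing over $(m,n)$ turns the $\tau^{-1}$-, $\tau^0$- and $\tau^1$-channels into $-\tfrac{1}{2\tau}h_{\lambda,0}(\tau)h_{\lambda',2}(\tau^{-1})$, $h_{\lambda,1}(\tau)h_{\lambda',1}(\tau^{-1})$ and $-\tfrac{\tau}{2}h_{\lambda,2}(\tau)h_{\lambda',0}(\tau^{-1})$ respectively, the same $h_{\lambda,j}$ occurring as a $q$-series in one channel and as a $\tq$-series in another because the $m$- and $n$-sums are built from structurally identical lattice weights. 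Likewise the three nonzero classes of $(p,q)\bmod 2$ give, after their $(p,q)$-sums, $-\tfrac{\ramuno}{2}h_{\lambda,3}(\tau)h_{\lambda',4}(\tau^{-1})$, $\tfrac{\ramuno}{2}h_{\lambda,4}(\tau)h_{\lambda',3}(\tau^{-1})$ and $-\ramuno\,h_{\lambda,5}(\tau)h_{\lambda',5}(\tau^{-1})$, the overall $\ramuno$ and the constants $\tfrac12,1$ being the half-integer Gaussian phases and the residue normalizations of $\Phi_{\sqrt{\tau}}$. That exactly six channels occur is consistent with the $6\times6$ nature of the invariant, the geometric character over the cubic field together with the three characters over $\BQ(2\cos(2\pi/7))$.

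The step I expect to be the real work is the collapse inside each triple-pole channel: a priori each such channel yields a \emph{sum} of several products of a $q$-series and a $\tq$-series (from the several terms in the second subleading Laurent coefficient and in the square of $\tfrac{d}{dx}\log G(x_0)$), and one must see that it reduces to the single product displayed in~\eqref{eq:q-series}. This forces specific relations among the auxiliary series and their $q$-derivatives — ultimately a Wronskian-type identity reflecting that they span the solution space of one $q$-difference equation — and it is precisely where the functions $h_{\lambda,j}$ must be defined so as to absorb the logarithmic-derivative terms, and where the constant $2\eu^{\pi\ramuno/4}$ and the monomial $q^{-\lambda/2}\tq^{-\lambda'/2}$ are pinned down. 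Once the definitions of the $h_{\lambda,j}$ are in place the rest is a finite, if lengthy, verification; the same separated structure is what will also power Theorem~\ref{thm.2}, since it forces the matrix assembled from the $h_{\lambda,j}$ to satisfy a single linear $q$-difference equation.
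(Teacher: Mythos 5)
Your proposal is correct and follows essentially the same route as the paper: deform the contour of~\eqref{Zdesc} upwards, collect residues at the poles of the three Faddeev factors (triple poles on the integer cone giving $j=0,1,2$ and simple poles on the half-integer points giving $j=3,4,5$), and use the decoupling of the residues and of the quadratic and linear exponentials into an $m$-part in $q$ and an $n$-part in $\tq$. The paper's own argument is in fact sketchier, deferring the bookkeeping you correctly identify as the real work (the Laurent expansion at the triple poles and the resulting definitions of $p_{\lambda,j,m}$ and $P_{\lambda',j,n}$) to the analogous computation in~\cite[Sec.~A.6]{GZ:qseries}.
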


In the above theorem
\be
\label{hH}
h_{\lambda,j}(\tau)\coloneqq H_{\lambda,j}(\eu ^{2\pi \ramuno \tau}),
\quad
H_{\lambda,j}(q) \=
\begin{cases}
  H^+_{\lambda,j}(q) & \text{if} \,\, |q|<1 \\
  (-1)^{\delta_j}H^-_{-\lambda,j}(q^{-1}) & \text{if} \,\, |q|>1
\end{cases}
\ee
where $H_{\lambda,j}(q)$ are $q$-series defined in Section~\ref{sub.proof.fac}
for $|q| \neq 1$ and $\delta=(0,1,2,0,0,0)$ is a weight vector. $H^{\pm}_{\lambda,j}(q)$
are power series of $q^{1/8}$ whose first few terms are given by
\begin{tiny}
\be
\label{Hqvalues}  
\begin{aligned}
H_{0,0}^+(q)&\=1+q^3+3q^4+7q^5+13q^6+\cdots  &   
H_{0,0}^-(q)&\=1+q^2+3q^3+7q^4+13q^5+\cdots
\\
H_{0,1}^+(q)&\=1-4q-8q^2-3q^3+3q^4+\cdots  &   
H_{0,1}^-(q)&\=1-4q-5q^2+q^3+7q^4+\cdots 
\\
H_{0,2}^+(q)&\=\frac{2}{3}-6q+6q^2+\frac{242}{3}q^3+200q^4+\cdots &
H_{0,2}^-(q)&\=\frac{5}{6}-10q+\frac{17}{6}q^2+\frac{141}{2}q^3+\frac{971}{6}q^4+\cdots
\\
H_{0,3}^+(q)&\=q^{1/8}(q+2q^{3/2}+4q^2+6q^{5/2}+\cdots)&
H_{0,3}^-(q)&\=q^{-1/8}(q+2q^{3/2}+4q^2+6q^{5/2}+\cdots)
\\
H_{0,4}^+(q)&\=1+q^3-q^4+3q^5-3q^6+\cdots&
H_{0,4}^-(q)&\=1+q^2-q^3+3q^4-3q^5+\cdots
\\
H_{0,5}^+(q)&\=q^{1/8}(q-2q^{3/2}+4q^2-6q^{5/2}+\cdots)&
H_{0,5}^-(q)&\=q^{-1/8}(q-2q^{3/2}+4q^2-6q^{5/2}+\cdots) 
\end{aligned}  
\ee
\end{tiny}

\subsection{A self-dual linear $q$-difference equation}
\label{sub.selfdual}

As we saw in the previous section, the factorization of the state
integral~\eqref{Zdesc} produced
six sequences $H_{\lambda,j}(q)$ of $q$-hypergeometric series for $j=0,\dots,5$
indexed by $\lambda \in \BZ$ and defined for $|q| \neq 1$. We now show that these
six sequences are solutions of a common sixth order linear $q$-difference equation.
The algebraic reason for this is that the integrand of the descendant state integral
is a $q$-holonomic function of three variables $x$, $\lambda$ and $\lambda'$, as
follows from the quasi-periodicity of the Faddeev quantum dilogarithm
(see Equation~\eqref{2quasi} below). Zeilberger
theory implies that the integral is a $q$-holonomic function of $\lambda$ and
$\lambda'$~\cite{WZ}. Due to the factorization of the integral, it follows that its
$\lambda$-dependent part is a $q$-holonomic function. An alternative explanation for
the linear $q$-difference equation would be to use the explicit $q$-hypergeometric
formulas for the six $q$-series. In fact, in this case we can do the corresponding
algebraic calculation by elementary telescoping methods and obtain the following.

\begin{theorem}
\label{thm.2}
For each $j=0,\ldots,5$, the sequence $H_{\lambda,j}(q)$ for $|q| \neq 1$
and $\lambda\in\BZ$ satisfies the linear $q$-difference equation
\begin{equation}
\label{Hqdiff}
\begin{aligned}
y_{\lambda +6}(q) + 2 \, y_{\lambda+5}(q) -(q+q^{\lambda+4}) \, y_{\lambda+4}(q)
-2(q+1) \, y_{\lambda+3}(q)
\\
& \hspace{-5.5cm}
- y_{\lambda+2}(q)+ 2q \, y_{\lambda+1} (q) +q \, y_\lambda(q)=0 \,.
\end{aligned}
\end{equation}
\end{theorem}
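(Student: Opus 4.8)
The plan is to prove Theorem~\ref{thm.2} by the elementary telescoping method alluded to in the text, exploiting the explicit $q$-hypergeometric formulas for the six families $H_{\lambda,j}(q)$ given in Section~\ref{sub.proof.fac}. Since all six sequences are claimed to satisfy the \emph{same} recursion~\eqref{Hqdiff}, and since that recursion has constant-in-$j$ coefficients (depending only on $q$ and $\lambda$), it suffices to verify a single creative-telescoping identity at the level of the summands. Concretely, each $H_{\lambda,j}(q)$ is (up to a prefactor and a shift in the summation variable) a sum $\sum_{n} F_j(\lambda,n;q)$ of a $q$-proper-hypergeometric term $F_j$; I would produce an explicit certificate $G_j(\lambda,n;q)=R_j(\lambda,n;q)\,F_j(\lambda,n;q)$ with $R_j$ a rational function of $q^\lambda,q^n,q$, such that
\begin{equation}
\label{eq.telescope}
\sum_{k=0}^{6} c_k(\lambda,q)\,F_j(\lambda+k,n;q) \;=\; G_j(\lambda,n+1;q)-G_j(\lambda,n;q),
\end{equation}
where $(c_0,\dots,c_6)(\lambda,q)=(q,\,2q,\,-1,\,-2(q+1),\,-(q+q^{\lambda+4}),\,2,\,1)$ are exactly the coefficients appearing in~\eqref{Hqdiff}. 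Summing~\eqref{eq.telescope} over $n$ makes the right-hand side telescope to zero (one must check the boundary terms $G_j(\lambda,n;q)\to 0$ as $n\to\infty$ and at the lower end of summation, which follows from the $q$-adic vanishing of $F_j$ for large $n$ when $|q|\ne 1$), yielding the recursion for $\sum_n F_j$, hence — after accounting for the $\lambda$-dependent prefactors $q^{-\lambda/2}$ etc.\ that rescale $y_\lambda$ — the recursion~\eqref{Hqdiff} for $H_{\lambda,j}$ itself.

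The steps, in order, would be: (i) record from Section~\ref{sub.proof.fac} the closed $q$-hypergeometric form of each $H^\pm_{\lambda,j}(q)$ and normalize them to a common summand shape $F_j(\lambda,n;q)$; (ii) for one representative $j$ (say $j=0$, the ``geometric'' family), guess the certificate $R_j$ — a ratio of the shift quotients $F_j(\lambda+k,n;q)/F_j(\lambda,n;q)$ suggests the denominator, and matching the order-$6$ numerator fixes $R_j$ — then verify~\eqref{eq.telescope} as a rational-function identity in $q^\lambda, q^n, q$ by clearing denominators; (iii) observe that the summand shapes for $j=1,\dots,5$ differ from $j=0$ only by elementary factors (powers of $q$, signs, an overall $q^{1/8}$, and in one case an extra linear factor in $q^n$) that do not affect the shift structure in $\lambda$, so the same computation, mutatis mutandis, handles them all; (iv) handle the $|q|>1$ branch of~\eqref{hH}: since $H_{\lambda,j}(q)=(-1)^{\delta_j}H^-_{-\lambda,j}(q^{-1})$ there, substituting $\lambda\mapsto-\lambda$, $q\mapsto q^{-1}$ in the recursion~\eqref{Hqdiff} must return the \emph{same} recursion up to an overall nonzero factor — this is precisely the ``self-duality'' named in the section title, and I would verify it by a direct symmetry check on the coefficient vector $(c_k)$; (v) reconcile prefactors: the object satisfying the clean recursion is $H_{\lambda,j}$ as defined, not the raw sum, so a short bookkeeping step divides out the $\lambda$-linear exponential prefactors and confirms the stated coefficients.

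The main obstacle I anticipate is step (ii): finding the telescoping certificate $R_j$ explicitly. Because the recursion is sixth order, the numerator of $R_j$ after clearing denominators is a polynomial of fairly high degree in $q^n$ (and $q^\lambda$), so the verification, while in principle routine and mechanizable (a $q$-analogue of Gosper/Zeilberger), is bulky by hand; the genuine content is guessing the right denominator of $R_j$ from the pole structure of the shift quotients. A secondary subtlety is the half-integer power $q^{1/8}$ in $H_{\lambda,3},H_{\lambda,5}$ and the rational (non-integer-coefficient) series $H_{\lambda,2}$: these do not obstruct the argument — the $q^{1/8}$ is a global factor invisible to the $\lambda$-recursion, and the rational coefficients live in $\BQ(q)$ where all the manipulations take place — but they require care in stating precisely what ring the identity~\eqref{eq.telescope} lives in. Once the certificate for $j=0$ is in hand, the rest is a finite, if tedious, check.
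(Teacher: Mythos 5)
Your overall strategy---prove the recursion by telescoping at the level of the summands---is in the same spirit as the paper's, although the paper avoids certificates entirely: since replacing $\lambda$ by $\lambda+k$ multiplies the summand $t_{\lambda,m}(q)$ by $q^{km}$, the paper reindexes $m\mapsto m-1$ in $q^{\lambda}H^{+}_{\lambda,0}(q)$ and reads the recursion coefficients directly off the Laurent expansion~\eqref{simp-q-id} of $(1-q^m)^2(1-q^{2m-1})(1-q^{2m})/q^{4m-1}$ as a polynomial in $q^{\pm m}$, so no Gosper--Zeilberger certificate ever has to be guessed. For $j=0,3,4,5$ your certificate approach would also work, since those summands are honest $q$-proper hypergeometric terms.

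The genuine gap is your step (iii). The families $j=1,2$ are \emph{not} obtained from $j=0$ by ``elementary factors'': their summands carry the factors $p_{\lambda,1,m}(q)$ and $p_{\lambda,2,m}(q)$ of~\eqref{p012def}, which involve the Lambert-type series $E_1^{(m)}(q)$, $E_2^{(m)}(q)$, $E_l^{(2m)}(q)$ and $\mathcal{E}_2(q)$. These are not ratios of $q$-Pochhammer symbols, so the summand is not $q$-proper hypergeometric, the certificate from $j=0$ cannot be reused, and standard creative telescoping does not directly apply. This is where essentially all of the work in the paper's proof lies: one needs the difference relations~\eqref{recE1m} and~\eqref{recE2m} to write $p_{\lambda,1,m}(q)=p_{\lambda,1,m-1}(q)+f_{1,m}(q)+4$ and then verify the nontrivial cancellation $\frac{(1-q^m)^2(1-q^{2m-1})(1-q^{2m})}{q^{4m-1}}\,f_{1,m}(q)=2q^{1-3m}+2q^{-2m}-6(q+1)q^{-m}+4q+10q^m-6q^{2m}$, which is what makes the extra terms disappear. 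A secondary problem is your step (iv): the coefficient vector of~\eqref{Hqdiff} is \emph{not} invariant (up to reversal, an overall unit, and $(\lambda,q)\mapsto(-\lambda-c,q^{-1})$)---the term $q^{\lambda+4}$ sits on the shift-$4$ coefficient while the shift-$2$ coefficient is the constant $-1$, so the ``direct symmetry check'' you propose would come out false; and in any case the $|q|>1$ branch requires a separate telescoping computation for the $H^{-}$ family, whose summands $T_{\lambda',n}(q)$ have the different quadratic exponent $n(n+1)+\lambda' n$. (The actual self-duality is the orthogonality statement of Theorem~\ref{thm.3}, not an invariance of the scalar recursion.) Finally, a small point: the prefactor to book-keep in step (v) is $(-1)^{\lambda}$ (and $q^{\lambda/2}$ for $j=3,5$), not $q^{-\lambda/2}$, which belongs to the state-integral normalization in~\eqref{eq:q-series} rather than to $H_{\lambda,j}$.
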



Consider the truncated Wronskian 
\begin{equation}
\label{eq:W-def}
W_\lambda(q)=\left(H_{\lambda+i,j}(q) \right)_{0\leq i,j\leq 5} \qquad
|q| \neq 1 \,
\end{equation}
of the six solutions to the $q$-difference equation~\eqref{Hqdiff}. Technically,
the Wronskian is a $\BZ \times 6$ matrix whose block indexed by the raws
corresponding to $i=0,\dots,5$ is the above matrix $W_\lambda$.
We next give an orthogonality property of the truncated Wronskian, which implies that
the six sequences of $q$-series form a fundamental solution set of~\eqref{Hqdiff}
and satisfy quadratic relations. 

\begin{theorem}
\label{thm.3}
The determinant of the truncated Wronskian is given by
\begin{equation}
\label{detW}  
\det(W_\lambda(q))  = 32 q^{\lambda+\frac{11}{4}} \,.
\end{equation}
The truncated Wronskian satisfies the orthogonality property
\begin{tiny}
\begin{equation}
\label{eq:orthogonality}
W_\lambda (q) \left(
\begin{matrix}
0 & 0 & \frac{1}{2} & 0 & 0 &0\\
0 & 1 & 0 & 0 & 0 & 0\\
\frac{1}{2} & 0 &0 &0 &0 &0\\
0 & 0 &0 &-1 &0 &0\\
0 &0 &0 &0 &\frac{1}{4} & 0\\
0 & 0 &0 &0 &0 &-1
\end{matrix}
\right)W_{-\lambda -5}(q^{-1})^T= \left(\begin{matrix}
-12 & 8 & -4 & 2 & 0 & 0\\
8 & -4 & 2 & 0 & 0 &0\\
-4 & 2 & 0 & 0& 0 &2\\
2 & 0 & 0 & 0 & 2 & -4\\
0 & 0 & 0 &2 & -4 & 8+2q^{\lambda +2}\\
0 & 0 & 2 & -4 & 8+2q^{\lambda +3} & -12 - 4q^{\lambda +2}-4q^{\lambda +3}
\end{matrix}\right).
\end{equation}
\end{tiny}
\end{theorem}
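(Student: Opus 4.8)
The plan is to extract both~\eqref{detW} and~\eqref{eq:orthogonality} from the companion‑matrix form of the $q$-difference equation~\eqref{Hqdiff}. Write $Y_\lambda=(y_\lambda,\dots,y_{\lambda+5})^{T}$ and let $A_\lambda(q)$ be the $6\times 6$ companion matrix of~\eqref{Hqdiff}, so that every solution of~\eqref{Hqdiff} obeys $Y_{\lambda+1}=A_\lambda(q)Y_\lambda$; since the $j$-th column of $W_\lambda(q)$ is the solution vector of $H_{\lambda,j}$, this gives $W_{\lambda+1}(q)=A_\lambda(q)W_\lambda(q)$. The parameter $\lambda$ enters $A_\lambda(q)$ only through its $(6,5)$-entry, which is missed by a cofactor expansion along the first column; that expansion yields $\det A_\lambda(q)=q$, independent of $\lambda$, hence $\det W_\lambda(q)=q^{\lambda}\det W_0(q)$. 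So~\eqref{detW} is reduced to identifying $\det W_0(q)=32\,q^{11/4}$, which I recover at the end from the orthogonality relation.

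For~\eqref{eq:orthogonality}, observe that $q\mapsto q^{-1}$ turns $H_{\lambda,j}(q^{-1})$ into solutions of~\eqref{Hqdiff} with parameter $q^{-1}$, so $W_\mu(q^{-1})$ is transformed by $A_\mu(q^{-1})$, and running the shift backwards shows $W_{-\lambda-5}(q^{-1})$ is a fundamental solution of that system. Consequently, for \emph{any} constant matrix $M$, the product $P_\lambda(q):=W_\lambda(q)\,M\,W_{-\lambda-5}(q^{-1})^{T}$ automatically satisfies the first‑order matrix recursion $A_\lambda(q)\,P_\lambda(q)=P_{\lambda+1}(q)\,A_{-\lambda-6}(q^{-1})^{T}$. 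The content of the theorem is that, for the block‑diagonal $M$ displayed in~\eqref{eq:orthogonality}, this recursion has the simple solution $R_\lambda(q)$, the matrix on the right of~\eqref{eq:orthogonality}; conceptually this reflects that the operator of~\eqref{Hqdiff} is self‑dual under $\lambda\mapsto -\lambda-5$, $q\mapsto q^{-1}$ \emph{up to} the migration of the $q^{\lambda}$-term from the coefficient of $y_{\lambda+4}$ to that of $y_{\lambda+2}$ — which is exactly the source of the correction terms $q^{\lambda+2},q^{\lambda+3}$ on the right of~\eqref{eq:orthogonality}.

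Thus the proof of~\eqref{eq:orthogonality} reduces to two checks: (i) that $R_\lambda(q)$ satisfies $A_\lambda(q)R_\lambda(q)=R_{\lambda+1}(q)A_{-\lambda-6}(q^{-1})^{T}$ — a finite computation with $6\times 6$ matrices over $\BQ(q)[q^{\lambda}]$, organised along the banded shape of $R_\lambda$ — and (ii) that $P_\lambda(q)=R_\lambda(q)$ for one value of $\lambda$. For (ii) I would insert the explicit $q$-hypergeometric formulas for the $H_{\lambda,j}$ from Section~\ref{sub.proof.fac} and verify the thirty‑six resulting bilinear identities by the $q$-analogue of creative telescoping, exactly as in Theorem~\ref{thm.2}; since a telescoping certificate proves each identity in all orders of $q$ simultaneously, this settles (ii) rigorously in spite of the fractional powers $q^{1/8}$, the two branches $H^{\pm}$, and the weight vector $\delta$ in~\eqref{hH}. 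Once~\eqref{eq:orthogonality} holds, taking determinants and using $\det A_\lambda(q)=q$ at both $q$ and $q^{-1}$ gives $\det W_0(q)\det W_0(q^{-1})=-16\,q^{-2\lambda-5}\det R_\lambda(q)=1024$, after the finite computations $\det M=-\tfrac1{16}$ and $\det R_\lambda(q)=-64\,q^{2\lambda+5}$. Writing $\det W_0(q)=q^{11/4}g(q^{1/8})$ with $g$ a power series with rational — hence real — coefficients and $g(0)$ equal to the leading coefficient $32$ of $\det W_0(q)$ (a finite check from~\eqref{Hqvalues} and~\eqref{Hqdiff}), the constant term of $g(s)g(1/s)=1024$ reads $\sum_{n\ge 0}g_n^{2}=1024=g(0)^{2}$, forcing $g\equiv 32$ and hence~\eqref{detW}; finally $\det W_\lambda(q)\neq 0$ shows the six sequences form a fundamental solution set, and the entries of~\eqref{eq:orthogonality} are the asserted quadratic relations.

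I expect the main obstacle to be part (ii): producing the telescoping certificates for all thirty‑six bilinear identities while carefully tracking the $q^{1/8}$-powers, the $H^{\pm}$ branches and the weight $\delta$, given that the $q^{\lambda}$-correction terms spoil exact shift‑invariance of the pairing. A secondary difficulty is checking that the ``automatic'' recursion satisfied by $P_\lambda(q)$ is literally the one in (i) with the displayed $M$, which is what pins down the involution $\lambda\mapsto -\lambda-5$ and the precise form of the $q^{\lambda}$-correction; getting the middle matrix $M$ right is essentially forced by making both sides of (i) consistent.
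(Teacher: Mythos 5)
Your reduction of both claims to the companion-matrix recursion is sound and is in fact the paper's own strategy for the $\lambda$-independence part: the paper verifies the polynomial identity $A(\lambda,q)Q(\lambda,q)\tilde A(\lambda+5,q)=Q(\lambda+1,q)$ over $\BQ[q^{\pm1},q^{\pm\lambda}]$, which is exactly your step (i). The genuine gap is your step (ii), the base case. Evaluating the thirty-six bilinear $q$-series pairings in closed form at one fixed $\lambda$ is not something ``the $q$-analogue of creative telescoping, exactly as in Theorem~\ref{thm.2}'' delivers: telescoping, as used there, produces the \emph{recursion} in $\lambda$ --- which is already your step (i) --- not the value of the pairing at a single $\lambda$. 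For the latter you would need explicit WZ-type certificates collapsing each double sum built from \eqref{H012def}--\eqref{H345def}, and you neither exhibit them nor argue that they exist. The paper closes this gap differently and much more cheaply: since the pairing is $\lambda$-independent, it suffices to compute it modulo $O(q^{\lambda/2})$ for one large $\lambda$, and for large $\lambda$ each $H^{\pm}_{\lambda,j}(q)$ agrees, up to $O(q^{3\lambda/2})$, with the first one or two terms of its defining sum (the explicit truncations $R^{\pm}_{\lambda,j}$ of \eqref{R+}), so the base case collapses to a finite computation with rational functions. You should replace (ii) by this truncation argument or something equivalent.

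There is a second gap in your determination of $\det W_0(q)=32q^{11/4}$ from ``$g(s)g(1/s)=1024$''. By \eqref{hH}, the factors $\det W_\lambda(q)$ and $\det W_{-\lambda-5}(q^{-1})$ are built from the $H^{+}$ and $H^{-}$ branches respectively, and \emph{both} are convergent power series in $q^{1/8}$ for $|q|<1$; so ``$g(1/s)$'' is not the formal Laurent series $\sum_n g_n s^{-n}$ but a different power series in positive powers of $s$, and the ``constant term'' $\sum_{n\ge 0}g_n^{2}$ you extract is neither formally defined (an infinite sum of rationals with no reason to converge) nor analytically accessible (the product of the two branches never converges on $|s|=1$). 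The relation $\det W_0(q)\det W_0(q^{-1})=1024$ therefore only expresses one determinant in terms of the other and cannot force $g\equiv 32$. The paper again uses the truncation: $\det W_\lambda(q)=q^{\lambda}c(q)$ with $c(q)$ independent of $\lambda$, while $\det W_\lambda(q)=\det R_\lambda(q)+O(q^{3\lambda/2})=32q^{\lambda+11/4}+O(q^{3\lambda/2})$ for large $\lambda$, which pins down $c(q)=32q^{11/4}$ directly and independently of the orthogonality relation.
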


Equations~\eqref{detW} and~\eqref{eq:orthogonality} were first guessed by explicit
computations of the $q$-series. But once they were guessed, they were proven
algebraically, i.e., by reducing them to identities among rational functions in
$\BQ(q,q^\lambda)$. This concludes our last algebraic aspect of matrix-valued
holomorphic quantum modular forms, discussed in detail in Section~\ref{sub.thmW}
below.
  
A consequence of Equation~\eqref{eq:orthogonality} (in fact, of its $(1,6)$-entry)
is that the collection of $q$-series $H_{\lambda,j}^{\pm}(q)$ satisfies the
quadratic relation 

\begin{equation}
\label{quadrel}
\begin{aligned}
\frac{1}{2}H_{\lambda ,0}^{+}(q)H_{\lambda ,2}^{-}(q)
-H_{\lambda ,1}^{+}(q)H_{\lambda ,1}^{-}(q)+
\frac{1}{2}H_{\lambda ,2}^{+}(q)H_{\lambda ,0}^{-}(q)
\\ & \hspace{-8cm}
-H_{\lambda ,3}^{+}(q)H_{\lambda ,3}^{-}(q)
+\frac{1}{4}H_{\lambda ,4}^{+}(q)H_{\lambda ,4}^{-}(q)
-H_{\lambda ,5}^{+}(q)H_{\lambda ,5}^{-}(q)=0 \,.
\end{aligned}
\end{equation}

A second consequence of Equation~\eqref{eq:orthogonality} (and in fact, an
equivalent statement to it) is that the $q$-holonomic module associated with
the linear $q$-difference equation~\eqref{Hqdiff} is self-dual. For a detailed
definition of self-dual $q$-holonomic modules, we refer the reader
to ~\cite[Sec 2.5]{GW:qmod}.

\subsection{A cocyle}

The last topic of our paper concerns the analytic continuation of
a cocycle, defined in two forms below (Equations ~\eqref{eq:descendant-matrix} and
~\eqref{eq:cocycle}), from a holomorphic function on $\BC\setminus\BR$ to one on
$\BC'=\BC\setminus(-\infty,0]$. This remarkable statement, which concerns the
holomorphic aspects of matrix-valued holomorphic quantum modular forms, follows
immediately from the factorization of the descendant state integrals
(Theorem~\ref{thm.1}), the self-duality property (Theorem~\ref{thm.3}) and the fact
that state integrals are holomorphic functions in the cut-plane $\BC'$. 

\begin{theorem}
\label{thm.4}
\rm{(a)} The matrix-valued function 
\begin{equation}
\label{eq:descendant-matrix}
F_{\lambda,\lambda' }(\tau)\coloneqq W_{-\lambda' -5}(\tilde{q}^{-1})\left( \begin{matrix}
0 & 0 &-\frac{\tau}{2} &0 &0 & 0\\
0 & -1 &0 &0 &0 & 0\\
-\frac{1}{2\tau} & 0 &0 &0 &0 &   0\\
0 & 0 &0 &0 &\frac{\ramuno}{2} & 0\\0 & 0 &0 &-\frac{\ramuno}{2} &0 & 0\\
0 & 0 &0 &0 &0 & -\ramuno 
\end{matrix} \right)W_\lambda(q)^{T}
\end{equation}
defined for $\tau = b^2 \in \mathbb{C} \backslash  \mathbb{R} $, has entries given
by the descendant state integrals up to a prefactor given by~\eqref{eq:q-series},
and therefore extends to a holomorphic function on the cut plane $\BC'$.
\newline
\rm{(b)}
The matrix-valued function 
\begin{equation}
\label{eq:cocycle}  
W_{\lambda,\lambda' }(\tau) \coloneqq \left(W_{\lambda'} (\tilde{q})^T\right)^{-1}
\left( \begin{matrix}
-\frac{1}{\tau} & 0 &0 &0 &0 & 0\\
0 & -1 &0 &0 &0 & 0\\
0 & 0 &-\tau &0 &0 &   0\\
0 & 0 &0 &0 &-\frac{\ramuno}{2} & 0\\0 & 0 &0 &-2\ramuno &0 & 0\\
0 & 0 &0 &0 &0 & \ramuno 
\end{matrix} \right) W_\lambda(q)^{T}
\end{equation}
extends to a holomorphic function of $\tau\in \mathbb{C}'$. 
\end{theorem}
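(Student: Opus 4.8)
The plan is to show that both matrix-valued functions are, entrywise, equal to the holomorphic descendant state integrals $Z^{(\lambda,\lambda')}_{(-2,3,7)}(\tau)$ (times explicit nowhere-vanishing holomorphic prefactors), and then invoke property (a) of the state integral --- that it defines a holomorphic function on the cut-plane $\BC'$.

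\medskip

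\emph{Step 1: Rewrite Theorem~\ref{thm.1} as a matrix identity.} First I would observe that the scalar factorization in~\eqref{eq:q-series}, together with the $q$-difference equation~\eqref{Hqdiff}, upgrades to an identity of $6\times 6$ matrices. Concretely, applying Theorem~\ref{thm.1} with $(\lambda,\lambda')$ replaced by $(\lambda+i,\lambda'+j)$ for $0\le i,j\le 5$ --- which is legitimate since $\lambda,\lambda'$ range over all of $\BZ$ --- assembles the left-hand sides into the matrix $\bigl(Z^{(\lambda+i,\lambda'+j)}_{(-2,3,7)}(\tau)\bigr)_{i,j}$ (up to the prefactor $2\eu^{\pi\ramuno/4}q^{-(\lambda+i)/2}\tilde q^{-(\lambda'+j)/2}$, which factors as a diagonal-on-the-left times diagonal-on-the-right matrix), and assembles the right-hand sides into exactly $W_{-\lambda'-5}(\tilde q^{-1})\,M(\tau)\,W_\lambda(q)^T$ for the middle matrix $M(\tau)$ appearing in~\eqref{eq:descendant-matrix}. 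Here I must be slightly careful about the row/column bookkeeping: the entries $h_{\lambda,j}(\tau)$ carry the shifted index $\lambda+i$, so the $i$-indexed data lands in $W_\lambda(q)$ (via rows $H_{\lambda+i,j}$), while the $j$-indexed data in $\lambda'$ lands in $W_{-\lambda'-5}(\tilde q^{-1})$ --- this is where the $-\lambda'-5$ shift and the use of $|q|>1$, i.e. the second branch of~\eqref{hH}, enter, and matching the sign weight $\delta$ against the diagonal middle matrix is the bookkeeping step that requires care. This identifies $F_{\lambda,\lambda'}(\tau)$ entrywise with descendant state integrals times prefactors, proving part (a).

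\medskip

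\emph{Step 2: Holomorphic continuation.} Each descendant state integral $Z^{(\lambda,\lambda')}_{(-2,3,7)}(\tau)$ in~\eqref{Zdesc} is absolutely convergent and defines a holomorphic function of $\tau=b^2$ on $\BC'$, by the same contour/convergence argument as for~\eqref{Zdef} (the inserted factor $\eu^{2\pi(\lambda b-\lambda' b^{-1})x}$ is entire and subdominant relative to the Gaussian decay of $\Phi_{\sqrt\tau}$ along the contour). The prefactor $2\eu^{\pi\ramuno/4}q^{-\lambda/2}\tilde q^{-\lambda'/2}$ is holomorphic and nowhere zero on $\BC'$. Hence every entry of $F_{\lambda,\lambda'}(\tau)$, a priori defined only on $\BC\setminus\BR$, agrees on that domain with a function holomorphic on the strictly larger connected set $\BC'$, and therefore extends. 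This gives part (a) in full.

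\medskip

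\emph{Step 3: Deduce part (b) from part (a) via self-duality.} For part (b) I would express $W_{\lambda,\lambda'}(\tau)$ in terms of $F_{\lambda,\lambda'}(\tau)$ using the orthogonality relation of Theorem~\ref{thm.3}. Writing $J$ for the constant middle matrix in~\eqref{eq:orthogonality} and $Q_\lambda(q)$ for its right-hand side, Theorem~\ref{thm.3} gives $W_{\lambda'}(\tilde q)\,J(\tilde q)\,W_{-\lambda'-5}(\tilde q^{-1})^T=Q_{\lambda'}(\tilde q)$, so $W_{-\lambda'-5}(\tilde q^{-1})^T = J(\tilde q)^{-1}W_{\lambda'}(\tilde q)^{-1}Q_{\lambda'}(\tilde q)$, with $J$ constant invertible and $\det W_{\lambda'}(\tilde q)=32\tilde q^{\lambda'+11/4}\neq 0$. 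Substituting into~\eqref{eq:descendant-matrix} and transposing, one checks that the diagonal middle matrix of~\eqref{eq:cocycle} is precisely $J$ times (the inverse of) the diagonal matrix in~\eqref{eq:descendant-matrix} up to the factors absorbed from $Q_{\lambda'}$ and the prefactors --- a finite linear-algebra verification over $\BQ(\tau)$ and over $\BQ(q,q^\lambda,\tilde q,\tilde q^{\lambda'})$. The upshot is a relation of the schematic form
\begin{equation*}
W_{\lambda,\lambda'}(\tau) = \bigl(W_{\lambda'}(\tilde q)^T\bigr)^{-1}\,(\text{const})\,Q_{\lambda'}(\tilde q)^{-1}\,F_{\lambda,\lambda'}(\tau)\,(\text{diagonal prefactor}),
\end{equation*}
in which every factor other than $F_{\lambda,\lambda'}$ is built from $W_{\lambda'}(\tilde q)$, the polynomial entries of $Q_{\lambda'}(\tilde q)$, and powers of $\tau$, all of which are holomorphic and (in the case of the $W$ and $Q$ factors) invertible on $\BC'$ since their determinants are nonzero monomials there. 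Since $F_{\lambda,\lambda'}$ extends holomorphically to $\BC'$ by part (a), so does $W_{\lambda,\lambda'}$.

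\medskip

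\emph{Main obstacle.} The only genuinely delicate point is Step~1: verifying that the index shifts, the contour deformation used to derive~\eqref{eq:q-series} for all $(\lambda+i,\lambda'+j)$, and the sign weight $\delta$ in~\eqref{hH} conspire so that the right-hand sides of the shifted scalar identities assemble into \emph{exactly} the matrix product $W_{-\lambda'-5}(\tilde q^{-1})\,M(\tau)\,W_\lambda(q)^T$ with the stated middle matrix --- in particular that the passage from $h_{\lambda',j}(\tau^{-1})$ to entries of $W_{-\lambda'-5}(\tilde q^{-1})$ correctly accounts for the $|q|>1$ branch. Once the matrix identity is in place, Steps~2 and~3 are routine: Step~2 is just "holomorphic on $\BC\setminus\BR$ and equal there to something holomorphic on $\BC'$ implies holomorphic on $\BC'$", and Step~3 is finite-dimensional linear algebra with Theorem~\ref{thm.3}.
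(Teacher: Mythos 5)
Your proposal is correct and follows essentially the same route as the paper: part (a) is obtained by assembling the shifted instances of the factorization in Theorem~\ref{thm.1} into the matrix product~\eqref{eq:descendant-matrix} and invoking the holomorphy of the descendant state integrals on $\BC'$, and part (b) is deduced from (a) by substituting the orthogonality relation~\eqref{eq:orthogonality} of Theorem~\ref{thm.3}, exactly as the paper indicates. The only caveat is the index bookkeeping you already flag in Step~1 (the entries correspond to $Z^{(\lambda+j,\,-\lambda'-5+i)}$ rather than $Z^{(\lambda+i,\lambda'+j)}$, via the $|q|>1$ branch of~\eqref{hH}), which is a matter of care rather than a gap.
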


Note that Equation~\eqref{eq:cocycle} follows
from ~\eqref{eq:descendant-matrix} and~\eqref{eq:orthogonality}.

To make contact with the results of the paper~\cite{GW:qmod},
Equation~\eqref{eq:cocycle} becomes the cocycle $U(-1/\tau)^{-1} D(\tau) U(\tau)$
where $U_\lambda(\tau)=(W_\lambda(e^{2 \pi i \tau})^T)^{-1}$ and $D(\tau)$ is the
automorphy factor in the middle matrix of the right hand side of~\eqref{eq:cocycle}.


\section{The $q$-series}
\label{sec.qseries}

\subsection{Algebraic properties of the Faddeev quantum dilogarithm}
\label{sub.faddeev}

As stated in the introduction, all of our theorems follow from algebraic
properties of state integrals, which in turn follow from some well-known properties
of the Faddeev quantum dilogarithm function; see ~\cite{Faddeev} as well
as~\cite[App.A]{AK}. In this section we review these properties briefly and
highlight their algebraic aspects.

To begin with, the Faddeev quantum dilogarithm function is defined by
\be
\label{faddef}
\Phi_b(x) \= \exp\Big( \frac{1}{4} \int_{\BR+i \epsilon}
\frac{e^{-2ixw}}{\sinh(bw)\sinh(b^{-1}w)}
\frac{dw}{w} \Big)
\ee
When $\Im(b^2)>0$, the Faddeev quantum dilogarithm is given by
a ratio of two infinite Pochhammer symbols as follows

\begin{equation}
\label{fad}
\Phi_b(x)
=\frac{(e^{2 \pi b (x+c_b)};q)_\infty}{
(e^{2 \pi b^{-1} (x-c_b)};\tq)_\infty} \,,
\end{equation}
where
\be
\label{qqtcb}
q=e^{2 \pi i b^2}, \qquad 
\tq=e^{-2 \pi i b^{-2}}, \qquad
c_b=\frac{i}{2}(b+b^{-1}), \qquad \Im(b^2) >0 \,. 
\ee
Remarkably, the ratio~\eqref{fad} admits an extension to all values of $b$ with 
$b^2 \in \BC\setminus (-\infty,0]$. $\Phi_b(x)$ is a meromorphic function of $x$
with poles in the set $c_b + i \BN b + i \BN b^{-1}$. In other words, 
the poles are given
by $x_{m,n} = i\big(m+\frac{1}{2})b + i\big(n+\frac{1}{2}) b^{-1}$ with $m$ and $n$
nonnegative integers.

The Faddeev quantum dilogarithm satisfies the quasi-periodicity
\be
\label{2quasi}
\frac{\Phi_b(x+c_b+ib)}{\Phi_b(x+c_b)} \= \frac{1}{1-q e^{2 \pi b x}}, 
\qquad
\frac{\Phi_b(x+c_b+ib^{-1})}{\Phi_b(x+c_b)} \= 
\frac{1}{1-\tq^{-1} e^{2 \pi b^{-1} x}} \,.
\ee
Quasi-periodicity among other things, explains the structure of the set of poles, and
that the residue of $\Phi_b(x)$ at the pole $x_{m,n}$ is given
by~\cite[Lem.2.1]{GK:qseries} 
\be
\label{resxmn}
\Res_{x=x_{m,n}} \Phi_b(x) = -\frac{b}{2\pi} \frac{(q;q)_\infty}{(\tq;\tq)_\infty}
\frac{1}{(q;q)_m}
\frac{1}{(\tq^{-1};\tq^{-1})_n} \,.
\ee
Notice that the poles are parametrized by a pair $(m,n)$ of natural numbers and the
residue decouples, i.e., it is the product of a function of $m$ times a function of $n$.

Although it will not play in our paper, we mention that the Faddeev quantum
dilogarithm satisfies the functional equation
\be
\label{Phi2term}
\Phi_b(x) \Phi_b(-x)=e^{\pi i x^2} \Phi_b(0)^2, 
\qquad
\Phi_b(0)^2=q^{\frac{1}{24}} \tq^{-\frac{1}{24}}
\ee
which implies that its set of zeros is the negative of its set of poles, 
and also allows us to move $\Phi_b(x)$ from the denominator to the numerator
of the integrand of a state-integral. 

\subsection{Factorization}
\label{sub.proof.fac}
  
To prove Theorem~\ref{thm.1}, we observe that the integrand is a meromorphic
function of $x$. We then deform the contour of integration upwards, apply the residue
theorem and collect residues. The poles of the Faddeev quantum dilogarithms were
discussed in the previous section, and they are parametrized by the lattice points
$x_{m,n}$ in a two dimensional cone, and the residues are decoupled functions of
$m$ (involving $q$) and $n$ (involving $\tq$), see Equation~\eqref{resxmn}. This
decoupling persists when we evaluate the exponential function
$\eu^{-\pi \ramuno(2x-c_b)^2 +2\pi (\lambda b-\lambda'  b^{-1})x}$ at $x_{m,n}$. Thus,
the sum over the lattice points $(m,n)$
of a two-dimensional lattice becomes a product over the lattice points $m$ of
one-dimensional lattice times a product over the lattice points $n$ of
the other one-dimensional lattice. Taking into account that the integrand
of~\eqref{Zdesc} is the product of three Faddeev quantum dilogarithms, this gives the
proof of Theorem~\ref{thm.1}. 

The details of the factorization of the original state integral~\eqref{Zdef}
were given in~\cite[section~A.6]{GZ:qseries}. Following the same proof mutatis
mutandis and inserting the integers $\lambda$ and $\lambda'$, produces the definition
of the $q$-series given in Equations~\eqref{H012def} and~\eqref{H345def} below and
concludes the proof
of Theorem~\ref{thm.1}
\qed

The corresponding series are given as follows. We define $H_{\lambda ,j}^\pm(q)$
for $|q|<1$ and $j=0,1,2$ by:
\begin{equation}
\label{H012def}
H_{\lambda ,j}^+(q) \=(-1)^{\lambda}\sum_{m=0}^\infty t_{\lambda,m}(q)p_{\lambda,j,m}(q),
\quad
H_{\lambda' ,j}^-(q) \=
(-1)^{\lambda' } \sum_{n=0}^\infty T_{\lambda' ,n}P_{\lambda',j ,n}(q),
\end{equation}
with
\begin{equation}
\label{t012def}  
t_{\lambda,m}(q)= \frac{q^{m(2m+1)+\lambda m} }{(q;q)_m^2 (q;q)_{2m}},
\quad T_{\lambda' ,n}(q) =\frac{q^{n(n+1)+\lambda'  n}}{ (q;q)_n^2(q;q)_{2n}},
\end{equation}
and 
\begin{equation}
\label{p012def}
\begin{aligned}
&p_{\lambda,0,m}(q)= 1,\quad p_{\lambda,1,m}(q)
= 4m+\lambda +1 -2E_1^{(m)}(q) -2 E_1^{(2m)} (q) ,\quad \\
&p_{\lambda,2,m}(q) =p_{\lambda,1,m}(q)^2 -2E_2^{(m)}(q) - 4E_2^{(2m)}(q)
-\frac{1}{3}\mathcal{E}_2(q), \\
&P_{\lambda',0 ,n}(q)=1,\quad P_{\lambda',1 ,n}(q)
=2n+\lambda'  +1 -2E_1^{(n)}(q) -2E_1^{(2n)}(q),\\ 
&P_{\lambda',2 ,n}(q)= P_{\lambda',1 ,n}(q)^2 +12 E_2^{(0)}(q)
- \frac{1}{2} - 2E_2^{(n)}(q) - 4E_2^{(2n)}(q) +\frac{1}{3}\mathcal{E}_2(q),
\end{aligned}
\end{equation}

\noindent
and for $j=3,4,5$ by:

\begin{tiny}
\begin{equation}
\label{H345def}
\begin{aligned}
H_{\lambda,3}^+(q) & =  \frac{(-1)^\lambda q^{1/8}}{(1-q^{1/2})^2}
\sum_{m=0}^\infty \frac{q^{(2m+1)(m+1)+\lambda(m+1/2)}}{(q^{3/2};q)_m^2(q;q)_{2m+1}}
& H_{\lambda' ,4}^-(q) & =
\sum_{n=0}^\infty \frac{q^{n(n+1)+\lambda'  n}}{(-q;q)_n^2 (q;q)_{2n}}
\\
H_{\lambda,4}^+(q) & =  \sum_{m=0}^\infty \frac{q^{(2m+1)m+\lambda m}}{(-q;q)_m^2(q;q)_{2m}}
& H_{\lambda' ,3}^-(q) & = \frac{(-1)^{\lambda' }q^{-1/8}}{(1-q^{-1/2})^2}\sum_{n=0}^\infty
\frac{q^{n(n+2)+\lambda' (n+1/2)}}{(q^{3/2};q)_n^2(q;q)_{2n+1}}
\\
H_{\lambda,5}^+(q) & = \frac{q^{1/8}}{(1+q^{1/2})^2}\sum_{m=0}^\infty
\frac{q^{(2m+1)(m+1)+\lambda(m+1/2)}}{(-q^{3/2};q)_m^2(q;q)_{2m+1}}  &H_{\lambda' ,5}^- (q)
& = \frac{q^{-1/8}}{(1+q^{-1/2})^2}\sum_{n=0}^\infty
\frac{q^{n(n+2)+\lambda' (n+1/2)}}{(-q^{3/2};q)_n^2(q;q)_{2n+1}} \,.
\end{aligned}
\end{equation}
\end{tiny}

Here, $\mathcal{E}_1(q)$ and $\mathcal{E}_2(q)$ denote the Eisenstein series
of weights 1 and 2
\begin{equation}
\label{E1E2}
\mathcal{E}_1(q)  = 1-4\sum_{n=1}^\infty \frac{q^n}{(1-q^n)}, \qquad   
\mathcal{E}_2(q)  = 1-24\sum_{n=1}^\infty \frac{q^n}{(1-q^n)^2}
\end{equation}
and
\be
\label{Elm}
E_{l}^{(m)}(q)=\sum_{s=1}^\infty \frac{s^{l-1}q^{s(m+1)}}{1-q^s} 
\ee
are some series that appear in the factorization of one-dimensional
state integrals~\cite{GK:qseries}.

When $(\lambda,\lambda' )=(0,0)$, this factorization can be connected to that
in \cite[eq.~(52)]{GK:evaluation} (see Appendix~\ref{sub.comparison} below). 


\subsection{The linear $q$-difference equation}

In this section we prove Theorem~\ref{thm.2}. We will use elementary telescoping
summation than the more elaborate methods of~\cite{WZ}.

We begin with the case $j=0$ and $|q|<1$, hence
$$
H_{\lambda,0}(q) = H_{\lambda,0}^+(q)
= (-1)^\lambda\sum_{m=0}^\infty \frac{q^{m(2m+1)+\lambda m} }{(q;q)_m^2 (q;q)_{2m}} \,.
$$
Since $(q;q)_m = \prod_{i=1}^m (1-q^i)$, we have
\begin{equation*}
\begin{aligned}
q^\lambda \sum_{m=0}^\infty \frac{q^{m(2m+1)+\lambda m} }{(q;q)_m^2 (q;q)_{2m}}
=& \sum_{m=0}^\infty  \frac{q^{m(2m+1)+\lambda (m+1)} }{(q;q)_m^2 (q;q)_{2m}} =
\sum_{m=1}^\infty  \frac{q^{(m-1)(2m-1)+\lambda m} }{(q;q)_{m-1}^2 (q;q)_{2m-2}} \\
=& \sum_{m=1}^\infty  \frac{q^{m(2m+1)+\lambda m} }{(q;q)_{m}^2 (q;q)_{2m}}
\frac{(1-q^m)^2(1-q^{2m-1})(1-q^{2m})}{q^{4m-1}} \,.
\end{aligned}
\end{equation*}
Since $1-q^m =0$ when $m=0$, we can replace the summation in the above equation
from $m=0$ to $m=\infty$. 
Since
\begin{equation}
\label{simp-q-id}
\frac{(1-q^m)^2(1-q^{2m-1})(1-q^{2m})}{q^{4m-1}}
=q^{1-4m} - 2q^{1-3m} -q^{-2m} +2q^{1-m} +2q^{-m} - q - 2q^m + q^{2m},
\end{equation}
we obtain
\begin{small}
\begin{equation*}
\begin{aligned}
q^{\lambda} H_{\lambda,0}^+(q) =& (-1)^\lambda  \sum_{m=0}^\infty
\frac{q^{m(2m+1)+\lambda m} }{(q;q)_{m}^2 (q;q)_{2m}}
(q^{1-4m} - 2q^{1-3m} -q^{-2m} +2q^{1-m} +2q^{-m} - q - 2q^m + q^{2m}) \\
= & qH_{\lambda-4,0}^+(q) +2q H_{\lambda-3,0}^+(q) -H_{\lambda-2,0}^+(q)
-(2+2q)H_{\lambda-1,0}^+(q) - q H_{\lambda,0}^+(q) +2  H_{\lambda+1,0}^+(q)
+ H_{\lambda+2,0}^{+}(q) \,.
\end{aligned}
\end{equation*}
\end{small}
This gives the $q$-difference equation for $H_{\lambda,j}(q)$ when $j=0$ and $|q|<1$.
Similarly one proves the $q$-difference equation for the cases $j=0,3,4,5$ and
whenever $|q|\neq 1$. 

For $j=1$ and $|q|<1$, we have 
$$
H_{\lambda,1}(q)=H_{\lambda,1}^+(q) = (-1)^\lambda \sum_{m=0}^\infty
\frac{q^{m(2m+1)+\lambda m} }{(q;q)_m^2 (q;q)_{2m}} p_{\lambda,m}^{(1)}(q) \,,
$$
where
\begin{equation*}
p_{\lambda,1,m}(q)  = 4m+\lambda+1 -2E_1^{(m)}(q)-2E_1^{(2m)}(q). 
\end{equation*}
Hence
\begin{equation*}
\begin{aligned}
qH_{\lambda-4,1}^+(q)+2qH_{\lambda-3,1}^+(q)-H_{\lambda-2,1}^+(q)
-(2+2q)H_{\lambda-1,1}^+(q)&-qH_{\lambda,1}^+(q)+2H_{\lambda+1,1}^+(q)+H_{\lambda+2,1}^+(q) 
\\
&= (-1)^\lambda \sum_{m=0}^\infty
\frac{q^{m(2m+1)+\lambda m} }{(q;q)_m^2 (q;q)_{2m}}g_{\lambda,m}(q),
\end{aligned} 
\end{equation*}
where 
\begin{equation}
\label{gtemp}
\begin{aligned}
g_{\lambda,m}(q) = & q^{1-4m}p_{\lambda-4,1,m}(q)
- 2q^{1-3m} p_{\lambda-3,1,m}(q)-q^{-2m}p_{\lambda-2,1,m}(q) 
\\
& + (2+2q)q^{-m}p_{\lambda-1,1,m}(q)
-qp_{\lambda,1,m}(q)-2q^m p_{\lambda+1,1,m}(q)+q^{2m}p_{\lambda+2,1,m}(q).
\end{aligned}
\end{equation}
We are going to show that $(-1)^\lambda \sum_{m=0}^\infty \frac{q^{m(2m+1)+\lambda m} }{
(q;q)_m^2 (q;q)_{2m}}g_{\lambda,m}(q) = q^\lambda H_{\lambda,1}^+(q)$. 
Noticing the recursive relation that
\begin{equation}
\label{recE1m}
E_{1}^{(m)}(q)-E_{1}^{(m-1)}(q)=\sum_{s=1}^{\infty}\left(
\frac{q^{s(m+1)}}{1-q^s}-\frac{q^{sm}}{1-q^s} \right)=\sum_{s=1}^\infty -q^{sm}
=-\frac{q^m}{1-q^m},
\end{equation}
we convert $p_{\lambda,1,m}(q)$ into the following form
\begin{equation}
\label{recp1}
\begin{aligned}
p_{\lambda,1,m}(q) =& 4m+\lambda +1 -2E_{1}^{(m-1)}(q) - 2E_1^{(2m-2)}(q)
+ \frac{2q^m}{1-q^m}+\frac{2q^{2m-1}}{1-q^{2m-1}}+\frac{2q^{2m}}{1-q^{2m}}\\
=& 4(m-1)+\lambda +1 -2E_{1}^{(m-1)}(q) - 2E_1^{(2m-2)}(q)+ f_{1,m}(q) + 4 \\
=& p_{\lambda,1,m-1}(q) + f_{1,m}(q) + 4,
\end{aligned}
\end{equation}
where 
\begin{equation}
\label{eq:def-f-m-1}
f_{1,m}(q) \coloneqq
\frac{2q^m}{1-q^m}+\frac{2q^{2m-1}}{1-q^{2m-1}}+\frac{2q^{2m}}{1-q^{2m}}. 
\end{equation}
Substituting the ~\eqref{recp1} into ~\eqref{gtemp}, combining the common factors
$p_{\lambda,1,m-1}(q) +f_{1,m}(q)$ and applying the identity ~\eqref{simp-q-id},
we see that
\begin{equation*}
\begin{aligned}
  g_{\lambda,m}(q) =&  \frac{(1-q^m)^2(1-q^{2m-1})(1-q^{2m})}{q^{4m-1}}
  \left(p_{\lambda,1,m-1}(q)+f_{1,m}(q)\right)\\
&- \left(2q^{1-3m} + 2q^{-2m} -6(q+1)q^{-m} +4q +10q^m -6q^{2m}\right). 
\end{aligned}
\end{equation*}
Since
\begin{equation*}
  \frac{(1-q^m)^2(1-q^{2m-1})(1-q^{2m})}{q^{4m-1}} f_{1,m}(q)
  = 2q^{1-3m} + 2q^{-2m} -6(q+1)q^{-m} +4q +10q^m -6q^{2m},
\end{equation*}
we conclude that 
\begin{equation*}
  g_{\lambda,m}(q) =  \frac{(1-q^m)^2(1-q^{2m-1})(1-q^{2m})}{q^{4m-1}}
  p_{\lambda,1,m-1}(q).
\end{equation*}
Therefore
\begin{equation*}
\begin{aligned}
  (-1)^\lambda \sum_{m=0}^\infty \frac{q^{m(2m+1)+\lambda m} }{(q;q)_m^2 (q;q)_{2m}}
  g_{\lambda,m}(q) =& (-1)^\lambda \sum_{m=1}^\infty
  \frac{q^{(m-1)(2m-1)+\lambda m} }{(q;q)_{m-1}^2 (q;q)_{2m-2}}p_{\lambda,1,m-1}(q)\\
  =& (-1)^\lambda q^\lambda \sum_{m=0}^\infty
  \frac{q^{m(2m+1)+\lambda m} }{(q;q)_m^2 (q;q)_{2m}} p_{\lambda,1,m}(q)
  = q^\lambda H_{\lambda,1}^+(q),
\end{aligned}
\end{equation*}
as desired. Similarly one proves the $q$-difference equation for $j=1,2$ and
$|q|\neq 1$, using the recursive relation ~\eqref{recE1m} and
\begin{equation}
  \label{recE2m}
  E_2^{(m)}(q)- E_2^{(m-1)}(q)
  = \sum_{s=1}^\infty \left(\frac{sq^{s(m+1)}}{1-q^s} - \frac{sq^{sm}}{{1-q^s}} \right)
  = \sum_{s=1}^\infty - sq^{sm} = -\frac{q^m}{(1-q^m)^2}.
\end{equation}
This completes the proof of Theorem~\ref{thm.2}.
\qed

\subsection{Self-duality}
\label{sub.thmW}

In this section we prove Theorem~\ref{thm.3}. Throughout this
section we assume $|q|<1$ and give the proof for this case only; the proof
for $|q|>1$ is similar and is omitted. Our method can be used to give a systematic
proof of the self-duality properties of the $q$-holonomic modules that appear in
the refined quantum modularity conjecture of knot complements or of closed 3-manifolds.

We first compute the determinant of the truncated Wronskian $W_\lambda(q)$.
It is well-known
(see eg~~\cite[Lemma~4.7]{IrreducibilityQDiff}) that it satisfies the first order
linear $q$-difference equation
\begin{equation*}
\det(W_{\lambda+1}(q))  - q \det( W_{\lambda}(q))  = 0 \,.
\end{equation*}
It follows that $\det(W_{\lambda}(q)) = q^{\lambda} c(q)$ for some $q$-series $c(q)$ 
independent of $\lambda$. We claim that
\begin{equation}
\label{detW-Oq}
\det(W_\lambda(q))  = 32q^{\lambda+11/4} +O(q^{3\lambda/2}),
\end{equation}
for all sufficiently large natural numbers $\lambda$, which implies
that $c(q) = 32q^{11/4}$. To show~\eqref{detW-Oq}, 
recall that $W_{\lambda}(q) = \left(H_{\lambda+i,j}^+(q)\right)_{0\leq i,j\leq 5}$
when $|q|<1$. The definition of $H_{\lambda,j}^+(q)$ implies that
\begin{equation}
\label{HR}  
  H^\pm_{\lambda,j}(q) = R^\pm_{\lambda,j}(q) + O(q^{3\lambda/2}),
\end{equation}
where $R^+_{\lambda,j}(q)$ and $R^-_{\lambda,j}(q)$ are given by 
\be
\label{R+}
\begin{aligned}
R^+_{\lambda,j}(q) &=(-1)^{\lambda} \left(p_{\lambda,j,0}(q)
+ p_{\lambda,j,1}\frac{q^{\lambda+3}}{(1-q)^4(1+q)}\right),\quad j=0,1,2,
\\  
R^+_{\lambda,3}(q) &=(-1)^{\lambda}\frac{q^{1/8}}{(1-q^{1/2})^2}
\frac{q^{1+\lambda/2}}{1-q},
\\  
R^+_{\lambda,4}(q) &=1+ \frac{q^{\lambda+3}}{(1+q)^3(1-q)^2},
\\  
R^+_{\lambda,5}(q) &=\frac{q^{1/8}}{(1+q^{1/2})^2} \frac{q^{1+\lambda/2}}{1-q},
\\  
\end{aligned}
\ee
and
\be
\label{R-}
\begin{aligned}
R^-_{\lambda,j}(q) &=(-1)^{\lambda} \left(P_{\lambda,j,0}(q)
+ P_{\lambda,j,1}(q)\frac{q^{\lambda+2}}{(1-q)^4(1+q)}\right),\quad j=0,1,2,
\\  
R^-_{\lambda,3}(q) &= (-1)^{\lambda}\frac{q^{-1/8}}{(1-q^{-1/2})^2}
\frac{q^{\lambda/2}}{1-q},
\\  
R^-_{\lambda,4}(q) &=1 +  \frac{q^{\lambda+2}}{(1+q)^3(1-q)^2},
\\  
R^-_{\lambda,5}(q) &=\frac{q^{-1/8}}{(1+q^{-1/2})^2} \frac{q^{\lambda/2}}{1-q}.
\\  
\end{aligned}
\ee

Thus,
\be
\label{WR}
W_\lambda(q) = R_\lambda(q) + O(q^{3\lambda/2}) \,,
\ee
where $R_\lambda(q) = (R_{\lambda+i,j}(q))_{0 \leq i,j \leq 5}$. 
Since 
\be
\det(W_\lambda(q)) +  O(q^{3\lambda/2})
= \det(R_\lambda(q)) + O(q^{3\lambda/2})= 32q^{\lambda+11/4} + O(q^{3\lambda/2})
\ee
Equation~\eqref{detW-Oq} follows. It is noteworthy that the Eisenstein series
$\calE_2(q)$ which appear in the entries of $R_\lambda(q)$ cancel upon taking
the determinant. The same happens in the entries of the matrix~\eqref{WDR2} below.

This concludes the proof of~\eqref{detW}. We next prove the orthogonality
property \eqref{eq:orthogonality} following the method of \cite[Sec 2.5]{GW:qmod}.
By the q-difference equation \eqref{quadrel}, we have 
\begin{equation}
\label{eq:mat-q-dif}
W_{\lambda+1}(q)=A(\lambda,q)W_\lambda(q),\quad
W_{-\lambda-1}(q^{-1}) =\tilde{A}(\lambda,q)W_{-\lambda}(q^{-1}),
\end{equation}
where
\be
\label{Amat}
A(\lambda,q)=\left(\begin{matrix}
 0 & 1 & 0 & 0 & 0 & 0\\
 0 & 0 & 1 & 0 & 0 & 0\\
 0 & 0 & 0 & 1 & 0 & 0\\
 0 & 0 & 0 & 0 & 1 & 0\\
  0 & 0 & 0 & 0 & 0 & 1\\
 -q & -2q & 1 & 2(1+q) & q+q^{\lambda+4} & -2
\end{matrix}\right)
\ee
and 
\begin{equation*}
\tilde{A}(\lambda,q)= A(-\lambda-1,q^{-1})^{-1}= \left(\begin{matrix}
 -2 & q & 2(1+q) & 1+q^{\lambda-2} & -2q & -q\\
 1 & 0 & 0 & 0 & 0 & 0\\
 0 & 1 & 0 & 0 & 0 & 0\\
 0 & 0 & 1 & 0 & 0 & 0\\
  0 & 0 & 0 & 1 & 0 & 0\\
 0 & 0 & 0 & 0& 1 & 0
\end{matrix}\right).
\end{equation*}
Consider
\begin{equation*}
Q(\lambda,q)\coloneqq\left(\begin{matrix}
 -12 & 8 & -4 & 2 & 0 & 0\\
 8 & -4 & 2 & 0 & 0 & 0\\
 -4 & 2 & 0 & 0 & 0 & 2\\
 2 & 0 & 0 & 0 & 2 & -4\\
  0 & 0 & 0 & 2 & -4 & 8+2q^{\lambda+2}\\
 0 & 0 & 2 & -4& 8+2q^{\lambda+3} & -12-4q^{\lambda+2}-4q^{\lambda+3}
\end{matrix}\right) \,.
\end{equation*}
It is easy to see that the matrices $A$, $Q$ and $\tilde{A}$ (all with entries in the
polynomial ring $\BQ[q^{\pm 1}, q^{\pm \lambda}]$) satisfy
\begin{equation}
\label{eq:quad-q-dif}
A(\lambda,q)Q(\lambda,q)\tilde{A}(\lambda+5,q) = Q(\lambda+1,q) \,.
\end{equation}
Note that all matrices above are invertible, with determinants
\be
\label{detsA}
\det(A(\lambda,q) = q, \qquad
\det(\tilde{A}(\lambda,q) = q, \qquad
\det(Q(\lambda,q) =  -64q^{5+2\lambda} \,.
\ee
Using ~\eqref{eq:mat-q-dif} and ~\eqref{eq:quad-q-dif}, we see that
\begin{equation*}
  W_{\lambda+1}(q)^{-1}Q(\lambda+1,q)\left(W_{-\lambda-6}(q^{-1})^{-1}\right)^T
  = W_{\lambda}(q)^{-1}Q(\lambda,q)\left(W_{-\lambda-5}(q^{-1})^{-1}\right)^T,
\end{equation*}
hence $W_{\lambda}(q)^{-1}Q(\lambda,q)\left(W_{-\lambda-5}(q^{-1})^{-1}\right)^T$
is independent of $\lambda$. The claim is that we have
\begin{equation}
  \label{eq:quad-claim}
  W_{\lambda}(q)^{-1}Q(\lambda,q)\left(W_{-\lambda-5}(q^{-1})^{-1}\right)^T
  = D\coloneqq\left(\begin{matrix}
  0& 0 & \frac{1}{2} & 0 & 0 & 0\\
 0 & 1 &  0& 0 & 0 & 0\\
 \frac{1}{2} & 0 & 0 & 0 & 0 & 0\\
 0 & 0 & 0 & -1 & 0 & 0\\
  0 & 0 & 0 & 0 & \frac{1}{4} & 0\\
 0 & 0 & 0 & 0& 0 & -1
\end{matrix}\right).
\end{equation}

Since we have seen that the left-hand side of ~\eqref{eq:quad-claim} is
independent of $\lambda$, it suffices to show that 
\begin{equation}
  \label{eq:quad-d}
  W_{\lambda}(q)^{-1}Q(\lambda,q)\left(W_{-\lambda-5}(q^{-1})^{-1}\right)^T
  = D + O(q^{\lambda/2}),
\end{equation}
for any sufficiently large $\lambda\in \mathbb{N}$. Equation~\eqref{WR}, together
with~\eqref{detW} gives that
\begin{small}
\be
\label{WDR}
W_{\lambda}(q)^{-1}Q(\lambda,q)\left(W_{-\lambda-5}(q^{-1})^{-1}\right)^T
+ O(q^{\lambda/2})
= R_{\lambda}(q)^{-1}Q(\lambda,q)\left(R_{-\lambda-5}(q^{-1})^{-1}\right)^T
+ O(q^{\lambda/2})
\ee
\end{small}
and an explicit calculation shows that
\be
\label{WDR2}
R_{\lambda}(q)^{-1}Q(\lambda,q)\left(R_{-\lambda-5}(q^{-1})^{-1}\right)^T
+ O(q^{\lambda/2}) = D + O(q^{\lambda/2}) 
\ee
where $R_{-\lambda-5}(q^{-1})^{-1} + O(q^{\lambda/2})$ can be computed by multiplying
the adjugate of $R_{-\lambda-5}(q^{-1}) + O(q^{\lambda/2})$ with the inverse of its
determinant ~\eqref{WR}. Equation~\eqref{eq:quad-d} follows.


This concludes the proof of Theorem~\ref{thm.3}.
\qed


\section{Stationary phase of the descendant state integral}
\label{sec.asy}

\subsection{Stationary phase}
\label{sub.statphase}

In this section we compute the stationary phase of the
state integral around its critical points. This is a well-known method
of asymptotic analysis that can be found in many classic books (eg.,~\cite{Olver}).
For convenience, we define a renormalized version of the descendant
state integral~\eqref{Zdesc} given by

\be
\label{Zhatdesc}
\hat{Z}_{(-2,3,7)}^{(\lambda,\lambda' )}(\tau) \=
(\tq/q)^{\frac{1}{24}} Z_{(-2,3,7)}^{(\lambda,\lambda' )}(\tau) \,.
\ee
Throughout this section, we will use the notation
\be
\label{htaub}
\hbar\coloneqq 2\pi \ramuno \tau, \qquad \tau=b^2 \,,
\ee
and determine the asymptotic expansion of
$\hat{Z}_{(-2,3,7)}^{(\lambda,\lambda' )}(\tau) $ as $\hbar\to 0$. 

It turns out that there are $6$ critical points $\alpha$ 
\begin{equation}
\label{6critpoints}  
(\alpha^3-\alpha-1)(\alpha^3+2\alpha^2-\alpha-1) =0 
\end{equation}
in two Galois orbits of the cubic number fields with discriminants $-23$ and $49$,
respectively. 
After a change of parametrization of these number fields (to match with
the conventions of~\cite{GZ:kashaev}, these critical points are given by

\begin{subequations}
\begin{align}
\label{alpha23}
\alpha &= -\xi + \xi^2, \qquad \xi^3-\xi^2+1 = 0 \\
\label{alpha49}
\alpha &= -1 -\eta,     \qquad \eta^3+\eta^2-2\eta-1 =0 \,.
\end{align}
\end{subequations}

The next theorem computes the stationary phase expansion of
$\hat{Z}_{(-2,3,7)}^{(\lambda,\lambda' )}(\tau)$ at each critical point.

\begin{theorem}
\label{thm.5}
The stationary phase of $\hat{Z}_{(-2,3,7)}^{(\lambda,\lambda' )}(\tau)$ is
given by
$\eu^{\frac{2\pi \ramuno\lambda'\log\alpha}{\hbar}} \hat\Phi^{(\alpha)}(\lambda,\hbar)$,
where
\be
\label{eq:phi-gene}
\hat\Phi^{(\alpha)}(\lambda,\hbar) \= \eu^{\frac{V_{0,0}(\a)}{\hbar}}
\Phi^{(\alpha)}(\lambda,\hbar), \qquad \Phi^{(\alpha)}(\lambda,\hbar) \= 
\frac{\alpha^\lambda}{\sqrt{\ramuno \Delta(\a)}} 
  \sum_{k=0}^\infty c_k(\a,\lambda) \hbar^k 
\ee
and 
\begin{equation}
\label{eq:v00-delta}
\begin{aligned}
V_{0,0}(\alpha) & \= 2\Li_2(-\alpha)- \Li_2(\alpha^{-2}),\\
\Delta(\alpha) & \=
-2\alpha^5+12\alpha^3 -2\alpha^2-16\alpha-10 \,,
\end{aligned}
\end{equation}
and $c_k(\a,\lambda) \in \BQ(\a)[\lambda]$ are polynomials in $\lambda$ of degree
$2k$ with coefficients in $\BQ(\a)$ with $c_0(\a,\lambda)=1$ given explicitly by a
formal Gaussian integration.
\end{theorem}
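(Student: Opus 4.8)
The plan is to compute the stationary phase expansion of the descendant state integral $\hat Z_{(-2,3,7)}^{(\lambda,\lambda')}(\tau)$ directly from its integral representation~\eqref{Zdesc}, following the standard Laplace/stationary-phase method (cf.~\cite{Olver}) combined with the small-$\hbar$ asymptotics of the Faddeev quantum dilogarithm. First I would recall that $\Phi_b(x)$ admits the asymptotic expansion
$\log\Phi_b(x)\sim \sum_{n\ge 0}\left(\frac{\hbar}{2\pi i}\right)^{n-1}\frac{B_n}{n!}\,\Li_{2-n}(-e^{x/(\ldots)})$ as $\hbar\to 0$, whose leading term is $\frac{1}{\hbar}$ times a dilogarithm; substituting this into the integrand of~\eqref{Zdesc} and writing $x$ in the natural scaled variable, the integral takes the shape $\int e^{\frac{1}{\hbar}V(z)+(\text{lower order})}\,dz$ where $V(z)$ is a sum of three dilogarithms plus the quadratic term plus the linear term $2\pi(\lambda b-\lambda' b^{-1})x$. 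The critical point equation $\partial_z V=0$ exponentiates to a polynomial equation in $e^{z}$; after clearing denominators this is precisely the sextic~\eqref{6critpoints}, whose roots lie in the two cubic fields of discriminant $-23$ and $49$. The value $V_{0,0}(\alpha)=2\Li_2(-\alpha)-\Li_2(\alpha^{-2})$ is $V$ evaluated at the critical point with the $\lambda,\lambda'$ part removed, and $\alpha$ itself is the critical value of $e^{z}$, which is why it appears in~\eqref{6critpoints} and~\eqref{alpha23}--\eqref{alpha49}.

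Next I would isolate the $\lambda'$-dependence. Since $\lambda'$ enters only through the linear term $-2\pi\lambda' b^{-1}x$, and since $b^{-1}x$ is the "dual" variable scaling like $-\frac{2\pi i}{\hbar}\log(\text{something})$ in the factorized picture, evaluating the linear term at the critical point produces exactly the prefactor $e^{2\pi i\lambda'\log\alpha/\hbar}$ claimed in the theorem; the remaining, genuinely asymptotic part $\hat\Phi^{(\alpha)}(\lambda,\hbar)$ depends on $\lambda$ but not on $\lambda'$. The $\lambda$-dependence of the leading term is an overall factor $\alpha^\lambda$: this is because shifting $\lambda$ by $1$ multiplies the integrand by $e^{2\pi b x}$, whose value at the critical point is $\alpha$, and the higher-order corrections in the stationary-phase series contribute the polynomial-in-$\lambda$ factors $c_k(\alpha,\lambda)$. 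The one-loop term $1/\sqrt{i\Delta(\alpha)}$ is the usual Gaussian factor $1/\sqrt{\text{(Hessian of }V\text{ at }\alpha)}$, and I would verify by an explicit computation that the Hessian equals (a constant times) $\Delta(\alpha)=-2\alpha^5+12\alpha^3-2\alpha^2-16\alpha-10$, using the critical point relation to reduce the rational function of $\alpha$ coming from differentiating the three dilogarithms and the quadratic term to the stated polynomial form.

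The higher-order coefficients $c_k(\alpha,\lambda)$ come from formal Gaussian integration: expand $V$ around the critical point, $V(z)=V(\alpha\text{-point})+\frac{1}{2}V''\,u^2+\sum_{j\ge 3}\frac{V^{(j)}}{j!}u^2$... wait, $+\sum_{j\ge 3}\frac{V^{(j)}}{j!}u^j$, absorb the cubic-and-higher terms and the subleading $\hbar$-corrections coming from $\log\Phi_b$ (the $B_n$ terms) into the exponential, expand in $\hbar$, and integrate term by term against the Gaussian $e^{V''u^2/(2\hbar)}$ using Wick's theorem; each contraction contributes a factor $\hbar/V''$, and since $V''$ is a rational function of $\alpha$ one stays in $\BQ(\alpha)$, while the linear term $2\pi\lambda b u$ surviving in the integrand after the shift introduces the polynomial dependence on $\lambda$ of degree $2k$ at order $\hbar^k$. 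Finally $c_0=1$ is immediate since the leading Gaussian integral is already accounted for by the $1/\sqrt{i\Delta(\alpha)}$ prefactor. I expect the main obstacle to be purely computational bookkeeping: correctly assembling the multi-term asymptotic expansion of the \emph{product} of three Faddeev dilogarithms (each with its own $\Li_{2-n}$ tower) together with the quadratic and linear exponentials, and then carrying out the algebraic reduction modulo the minimal polynomial of $\alpha$ to recognize the compact closed forms for $V_{0,0}(\alpha)$ and $\Delta(\alpha)$; no conceptual difficulty is anticipated, as the method is standard and the analogous computations for $4_1$ and $5_2$ have already been carried out in~\cite{GGM:resurgent}.
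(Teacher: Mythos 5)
Your proposal follows essentially the same route as the paper: expand the Faddeev dilogarithms via their small-$\hbar$ asymptotics, locate the critical points of the leading potential (which exponentiate to the sextic~\eqref{6critpoints}), read off $e^{2\pi i\lambda'\log\alpha/\hbar}$ and $\alpha^\lambda$ from the linear terms at the saddle, and generate the $c_k(\alpha,\lambda)$ by formal Gaussian integration, with the degree-$2k$ dependence on $\lambda$ coming from the $\lambda\hbar^{1/2}y$ term exactly as you say. Two points need attention, one of route and one of substance. On route: the paper does not carry the quadratic exponential into the potential; it first applies the inversion relation~\eqref{Phi2term} to trade $\Phi_b(2x-c_b)\,e^{-\pi i(2x-c_b)^2}$ for $\Phi_b(0)^2/\Phi_b(-2x+c_b)$, and this is precisely what produces the renormalizing factor $(\tq/q)^{1/24}$ in the definition~\eqref{Zhatdesc} of $\hat Z$ --- your write-up never accounts for that hat, so you should either adopt this step or track the constant explicitly. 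On substance: your identification of $1/\sqrt{\ramuno\Delta(\alpha)}$ with $1/\sqrt{\text{Hessian}}$ ``up to a constant'' is wrong as stated. In the paper $\Delta=2V_{0,2}\,e^{-2V_{1,0}}$ where $V_{1,0}=\tfrac12\log(1-\alpha^{-2})$ is the order-$\hbar^0$ ($\Li_1$) contribution generated by the $c_b$-shift in the argument $2x-c_b$; the ratio $e^{-2V_{1,0}}=(1-\alpha^{-2})^{-1}$ is a nonconstant element of $\BQ(\alpha)$, not a rational number, so the one-loop prefactor is not the bare Gaussian factor. Relatedly, the expansion of $\log\Phi_b$ you quote with $B_n/n!$ should be the even-index one with $B_{2n}(1/2)/(2n)!$ (Equation (65) of [AK]); there is no $B_1$ term, and the $\Li_1$ correction enters only through the shift just described. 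You do say you would absorb the subleading corrections of $\log\Phi_b$ into the exponential, so this is recoverable bookkeeping rather than a fatal flaw, but as written the computation of $\Delta(\alpha)$ would not come out right.
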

  
We have computed 400 coefficients of the above series for $\lambda=0$ and
40 coefficients for general $\lambda$. Since there are two number fields
involved, we present the asymptotic series $\hat{\Phi}^{(\sigma)}(\lambda,\hbar)$
separately for each field. For $\alpha$ as in~\eqref{alpha23}, we have
\begin{small}
\begin{equation}
\label{phi23la}    
\begin{aligned}
  \hat{\Phi}^{(\sigma)}(\lambda,\hbar)
  =\frac{
\alpha^\lambda\eu ^{\frac{V_{0,0}}{\hbar}}}{\sqrt{\ramuno ( -6\xi^2+10\xi-4) }}
  \Bigg( 1 +& \Bigg(\left(-\frac{1}{46}\xi^2 -\frac{7}{92}\xi
+\frac{3}{92}\right)\lambda^2 + \left(\frac{3}{46}\xi^2 -\frac{11}{92}\xi
+\frac{17}{46}\right)\lambda\\
  &+\frac{293}{8464}\xi^2 +\frac{127}{2116}\xi -\frac{681}{8464}\Bigg) \hbar
  +O(\hbar^2)\Bigg),
\end{aligned}
\end{equation}
\end{small}
and for $\alpha$ as in~\eqref{alpha49}, we have
\begin{small}
\begin{equation}
\label{phi49la}    
\begin{aligned}
  \hat\Phi^{(\sigma)}(\lambda,\hbar) = \frac{\alpha^\lambda
 \eu ^{\frac{V_{0,0}}{\hbar}}}{\sqrt{\ramuno(-4\eta^2 +2\eta -2)}}\Bigg(
   1 +&\Bigg( \left(\frac{1}{28}\eta^2 + \frac{1}{14}\eta -\frac{1}{28}\right)
   \lambda^2 + \left(\frac{1}{28}\eta^2 -\frac{1}{14}\eta +\frac{3}{14}\right)\lambda \\
+& \frac{1}{16}\eta^2 + \frac{1}{16}\eta -\frac{17}{168}\Bigg)\hbar +O(\hbar^2)\Bigg) \,.
\end{aligned}
\end{equation}
\end{small}

We can give more terms when $\lambda=0$. For $\alpha$ as in~\eqref{alpha23}, we have
\begin{equation}
\label{phi23}    
\begin{aligned}
\hat{\Phi}^{(\sigma)}(0,\hbar) \= &
\frac{\eu ^{\frac{V_{0,0}}{\hbar}}}{\sqrt{\ramuno (-6\xi^2+10\xi-4)} }
\Bigg(1 + \left(\frac{293}{8464}\xi^2 + \frac{127}{2116}\xi - \frac{681}{8464}\right)
\hbar \\
&+ \left(\frac{65537}{6229504}\xi^2 - \frac{50607}{6229504}\xi
  + \frac{2535}{778688}\right)\hbar^2 +O(\hbar^3)\Bigg),
\end{aligned}
\end{equation}
and for $\alpha$ as in~\eqref{alpha49}, we have
\begin{equation}
\label{phi49}    
\begin{aligned}
\hat{\Phi}^{(\sigma)}(0,\hbar) \= &
\frac{\eu ^{\frac{V_{0,0}}{\hbar}}}{\sqrt{\ramuno(-4\eta^2 +2\eta -2)} }
\Bigg(1 + \left(\frac{293}{8464}\xi^2 + \frac{127}{2116}\xi - \frac{681}{8464}\right)
\hbar \\
&+ \left(\frac{65537}{6229504}\xi^2 - \frac{50607}{6229504}\xi
  + \frac{2535}{778688}\right)\hbar^2 +O(\hbar^3)\Bigg),
\end{aligned}
\end{equation}

\subsection{Formal Gaussian integration}

Using the identity~\eqref{Phi2term}
we convert the descendant state integral into the following form,
\begin{align}\label{cancel}
Z^{(\lambda,\lambda')}_{(-2,3,7)}(\hbar) &=\left(\frac{q}{\tilde q}\right)^{-\frac{1}{24}}
\int_{\mathbb{R}+\ramuno \frac{c_b}{2}+\ramuno\varepsilon}\Phi_{b}(x)^2\Phi_{b}(2x-c_b)
\eu^{-\pi \ramuno(2x-c_b)^2 +2\pi (\lambda b - \lambda ' b^{-1})x}\dif x
\\ & =\int_{\mathbb{R}+\ramuno \frac{c_b}{2}+\ramuno\varepsilon}
\frac{\Phi_{b}(x)^2}{\Phi_{b}(-2x+c_b)} \eu^{2\pi (\lambda b - \lambda ' b^{-1})x}\dif x,
\end{align}
and then apply the approximation~\cite[eq.~(65)]{AK}
\begin{equation*}
\Phi_{b}\left(\frac{z}{2\pi b}\right)=\exp\left(\sum_{n=0}^\infty\hbar^{2n-1}
\frac{B_{2n}(1/2)}{(2n)!}\Li_{2-2n}(-\eu^{z})\right).
\end{equation*}

We begin with a change of variables $x\mapsto\frac{z}{2 \pi b}$, so that
\begin{equation*}
\Phi_{b}(x)^2 = \Phi_b\left(\frac{z}{2\pi b}\right)^2 \sim
\exp\left(\sum_{n=0}^\infty\hbar^{2n-1} \frac{B_{2n}(1/2)}{(2n)!}2\Li_{2-2n}(-\eu^{z})
\right) 
\end{equation*}
and
\begin{equation*}
\begin{aligned}
\Phi_{b}(-2x+c_b) = \Phi_{b}\left(\frac{-2z+2\pi b c_b}{2\pi b}\right) &
\sim\exp\left(\sum_{n=0}^\infty\hbar^{2n-1} \frac{B_{2n}(1/2)}{(2n)!}
\Li_{2-2n}(-\eu^{-2z+2\pi b c_b})\right) \\
&=\exp\left(\sum_{n=0}^\infty\hbar^{2n-1} \frac{B_{2n}(1/2)}{(2n)!}
\Li_{2-2n}(\eu^{-2z+\frac{\hbar}{2}})\right). 
\end{aligned}
\end{equation*}
Using the identity
\begin{equation*}
\Li_{2-2n}(\eu^{-2z+s})=\sum_{k=0}^{\infty}\frac{\Li_{2-2n-k}(\eu^{-2z})}{k!}s^k,
\end{equation*}
we have
\begin{equation*}
\Li_{2-2n}(-\eu^{-2z+\frac{\hbar}{2}}) = \sum_{k=0}^{\infty}
\frac{\Li_{2-2n-k}(\eu^{-2z})}{k!}\left(\frac{\hbar}{2}\right)^k.
\end{equation*}
Collecting the above equalities up, we obtain
\begin{equation*}
Z^{(\lambda,\lambda')}_{(-2,3,7)}(\hbar) \sim \frac{\ramuno }{
  \sqrt{2\pi \ramuno \hbar}} \int \exp\left(\lambda z
  + \frac{2\pi \ramuno\lambda'}{\hbar}z+ V(z,\hbar)\right) \dif z,
\end{equation*}
where
\begin{equation*}
\begin{aligned}
V(z,\hbar) =  &\sum_{n=0}^\infty\hbar^{2n-1} \frac{B_{2n}(1/2)}{(2n)!}2
\Li_{2-2n}(-\eu^{z}) - \sum_{n,k\geq 0}\hbar^{2n+k-1} \frac{B_{2n}(1/2)}{(2n)!}
  \frac{\Li_{2-2n-k}(\eu^{-2z})}{k!}\frac{1}{2^k}\\
  = &\sum_{n=0}^\infty\hbar^{2n-1} \frac{B_{2n}(1/2)}{(2n)!}2\Li_{2-2n}(-\eu^{z})
  - \Bigg(\sum_{n,k\geq 0} \hbar ^{2n+2k-1}\frac{B_{2n}(1/2)}{(2n)!}
   \frac{\Li_{2-2n-2k}(\eu^{-2z})}{(2k)!}\frac{1}{2^{2k}}
   \\
   &+ \sum_{n,k\geq 0}\hbar ^{2n+2k}\frac{B_{2n}(1/2)}{(2n)!}
   \frac{\Li_{1-2n-2k}(\eu^{-2z})}{(2k+1)!}\frac{1}{2^{2k+1}} \Bigg) \\
 = & \sum_{n=0}^\infty \hbar ^{2n-1}\left(\frac{B_{2n}(1/2)}{(2n)!}
   2\Li_{2-2n}(-\eu^{z})   -  \sum_{k=0}^n
   \frac{B_{2n-2k}(1/2)}{(2n-2k)!(2k)!}\frac{\Li _{2-2n}(\eu^{-2z})}{2^{2k}}\right)\\
 &+\sum_{n=0}^\infty \hbar ^{2n}\left( - \sum_{k=0}^n \frac{B_{2n-2k}(1/2)}{
 (2n-2k)!(2k+1)!}\frac{\Li_{1-2n}(\eu^{-2z}) }{2^{2k+1}}\right).  
 \end{aligned}
\end{equation*}
Therefore, if we define
\begin{equation}\label{eq:V-n}
\begin{aligned}
  V_{2n+1}(z)& = - \sum_{k=0}^\infty \frac{B_{2n-2k}(1/2)}{(2n-2k)!(2k+1)!}
  \frac{\Li_{1-2n}(\eu^{-2z}) }{2^{2k+1}}, \\
  V_{2n}(z) & = \frac{B_{2n}(1/2)}{(2n)!}2\Li_{2-2n}(-\eu^{z})
  -  \sum_{k=0}^n \frac{B_{2n-2k}(1/2)}{(2n-2k)!(2k)!}
  \frac{\Li _{2-2n}(\eu^{-2z})}{2^{2k}},
\end{aligned}
\end{equation}
then $V(z,\hbar)  = \sum_{n=0}^\infty \hbar ^{n-1}V_n(z)$, hence
\begin{equation*}
  \hat{Z}^{(\lambda,\lambda')}_{(-2,3,7)}(\hbar)\sim
  \frac{\ramuno}{ \sqrt{2\pi \ramuno \hbar}}
  \int\exp\left(\lambda z + \frac{2\pi \ramuno\lambda'}{\hbar}z
    +\sum_{n=0}^\infty \hbar ^{n-1} V_{n}(z)\right)\dif z.
\end{equation*}

Solving $\od{}{z}\left(2\pi \ramuno \lambda'z+V_0(z)\right)=0$, we find that 
the critical point equation is
\begin{equation}
\label{critpoints}  
(\alpha^3-\alpha-1)(\alpha^3+2\alpha^2-\alpha-1) =0, \quad (\alpha = \eu ^z).
\end{equation}
The expansion $V_n(z) = \sum_{m=0}^\infty (z-\log\alpha)^m V_{n,m}(\log\alpha)$ at a
critical point $z= \log\alpha 
$ thus gives
\small
\begin{equation}
  \label{eq:asyp-til-gauss}
\begin{aligned}
  \hat{Z}^{(\lambda,\lambda')}_{(-2,3,7)}(\hbar)&\sim
  \frac{\ramuno\alpha^{\lambda}
    \eu ^{\frac{V_{0,0}+2\pi \ramuno\lambda'\log\alpha}{\hbar}}}{\sqrt{2\pi\ramuno } }
  \int\dif y \eu ^{V_{0,2}y^2} \exp\left(\lambda\hbar^{\frac{1}{2}}y+\sum_{m\geq 3}
    \hbar^{\frac{m}{2}-1}
y^m V_{0,m} +\sum_{n\geq 1,m\geq 0}\hbar^{n-1+\frac{m}{2}} y^m V_{n,m}\right)\\
&\eqqcolon \eu^{\frac{2\pi \ramuno\lambda'\log\alpha}{\hbar}} \hat\Phi(\lambda,\hbar).
\end{aligned}
\end{equation}
\normalsize
where the change of variables $z \mapsto \log\alpha +\hbar^{\frac{1}{2}}y$ is applied,
and 
\small
\begin{equation}\label{eq:V-n-m}
\begin{aligned}
 V_{0,0} &= 2\Li_2(-\alpha)-\Li_2(\alpha^{-2}), \\
 V_{0,1} &= - 2\pi \ramuno \lambda',\\
 V_{1,0} &= -\frac{1}{2}\Li_1 (\alpha^{-2}) = \frac{1}{2}\log( 1-\alpha^{-2}),\\
 V_{0,2} &= \Li_0(-\alpha) -  2\Li_ 0 (\alpha^{-2})
 = - \frac{\alpha^2 - \alpha +2}{(\alpha-1)(\alpha+1)} 
 =\alpha^5-\alpha^4-7\alpha^3 +\alpha^2+4\alpha+5, \\
 V_{2n,m} &= \frac{1}{m!}\left( \frac{B_{2n}(1/2)}{(2n)!}
   2\Li_{2-2n-m}(-\alpha) -(-2)^m\Li_{2-2n-m}(\alpha^{-2})
   \sum_{k=0}^n \frac{B_{2n-2k}(1/2)}{(2n-2k)!(2k)!2^{2k}}\right),\\
 V_{2n+1,m} &= -\frac{(-2)^m}{m!}\Li_{1-2n-m}(\alpha^{-2})
 \sum_{k=0}^n \frac{B_{2n-2k}(1/2)}{(2n-2k)!(2k+1)!2^{2k+1}}.
\end{aligned}
\end{equation}
\normalsize

Note that for $n=1$ and $m=0$, we have $\hbar^{n-1+\frac{m}{2}} y^m V_{n,m}= V_{1,0}$.
Expand the exponential in the integrand, collect $\hbar$'s and use the formal
Gaussian integrals, we obtain
\begin{equation*}
\hat\Phi(\lambda,\hbar) = \frac{\alpha^{\lambda}
\eu ^{\frac{V_{0,0}}{\hbar}}\eu ^{V_{1,0}}}{
\sqrt{2\ramuno  V_{0,2}} }  \left( 1 +O(\hbar) \right) =
\frac{\alpha^{\lambda}
\eu ^{\frac{V_{0,0}}{\hbar}}}{\sqrt{\ramuno \Delta }}\left( 1 +O(\hbar) \right)  
\end{equation*}
where
\begin{equation*}
  \Delta \coloneqq \frac{2 V_{0,2}}{ \eu ^{2V_{1,0}}} =
  \frac{-2 \alpha^2(\alpha^2-\alpha+2)}{(\alpha-1)^2(\alpha+1)^2}
  \= -2\alpha^5+12\alpha^3 -2\alpha^2-16\alpha-10 \,.   
\end{equation*}
This concludes the proof of Theorem~\ref{thm.5}.
\qed

When $\alpha$ satisfies~\eqref{alpha23}, we have
\be
\label{someV23}
V_{1,0} = 
\frac{1}{2}\log (1-\xi^2), \qquad
V_{0,2} = 
-3\xi^2 + 2\xi, \qquad
\Delta = -6\xi^2+10\xi-4 
\ee
whereas when $\alpha$ satisfies~\eqref{alpha49}, we have
\be
\label{someV49}
V_{1,0} = 
\frac{1}{2}\log(\eta^2+\eta-2), \qquad
V_{0,2} = 
-\eta^2-3\eta+3, \qquad
\Delta = -4\eta^2 +2\eta -2 \,.
\ee
Computing out the formal Gaussian integrals in ~\eqref{eq:asyp-til-gauss}, we
obtain~\eqref{phi49la} and~\eqref{phi49}. 


\section{Analytic aspects}
\label{sec:asy}

In this last section we discuss analytic aspects of matrix-valued holomorphic
quantum modular forms. We have already introduced the descendant state integrals
which are analytic functions of $\tau \in \BC'$, as well as the collection of
$q$-series $H_j^\pm(q)$ which are holomorphic functions of $\tau$ (where
$q=e^{2\pi i \tau}$) in the upper half plane $\Im(\tau)>0$. In this section we discuss
the radial asymptotics of these holomorphic functions as $\tau$ tends to zero in
a fixed ray  (i.e., $\arg(\tau) =\theta \in (-\pi,\pi)$ is fixed). Naturally, one
expects these asymptotics to be given in terms of the formal power series
$\hat\Phi^{(\a)}(\lambda,\hbar)$ of Section~\ref{sec.asy}, up to some elementary
constants.

All results that we report in this section are numerical, and void of proofs. 

\subsection{Asymptotic expansion of the $q$-series}
\label{sec:asyq}

Fixing a ray $\arg(\tau)=\theta$, we first computed numerically the values of
the series~\eqref{hH} when $\tau=e^{i \theta}/N$ for $N=800,\dots,1000$ to high
precision in \texttt{pari} using the inductive definition of $p_{0,0,m}(q)$ and
$P_{0,0,n}(q)$ obtained easily from their definition~\eqref{p012def}. 

We then used Richardson and Zagier's extrapolation methods which
are explained in \cite{GZ:kashaev} and in great detail in \cite{Wheeler:thesis},
to extrapolate numerically from this data the coefficients of their asymptotic
expansion. Properly normalized, these are algebraic numbers in a known number field
(one of the cubic fields of~\eqref{alpha23}--~\eqref{alpha49}) that are known to
high accuracy, which can then be recognized exactly. Having done so, the coefficients
that we found ought to match one of the $\hat\Phi^{(\s)}(\hbar)$ series, up to
some elementary factors, for some value of $\s$, which of course depends on the ray.

For example, when $\arg(\tau) = \pi/5$, we found numerically the following relation
between the radial asymptotics of the $q$-series~\eqref{hH} and
the asymptotic series~$\hat\Phi$, where $\hbar=2 \pi i \tau$ and $q=e^{2 \pi i \tau}$. 

\be
\label{qasy}
\begin{aligned}
H_{0,0}^+(q) &\= \left(\frac{q}{\tilde{q}}\right)^{1/24}\tau\eu^{\frac{\pi \ramuno}{4}}
\hat\Phi^{(\sigma_1)}(\hbar), &
H_{0,0}^-(q) &\= \left(\frac{q}{\tilde{q}}\right)^{-1/24}\tau\eu^{\frac{\pi \ramuno}{4}}
\hat\Phi^{(\sigma_2)}(-\hbar) \\
H_{0,1}^+(q) &\= \left(\frac{q}{\tilde{q}}\right)^{1/24}\eu^{\frac{\pi \ramuno}{4}}
\hat\Phi^{(\sigma_1)}(\hbar) &
H_{0,1}^-(q) &\= \left(\frac{q}{\tilde{q}}\right)^{-1/24}\eu^{\frac{\pi \ramuno}{4}}
\hat\Phi^{(\sigma_2)}(-\hbar) \\
H_{0,2}^+(q) &\= \left(\frac{q}{\tilde{q}}\right)^{1/24}\frac{2}{3\tau}
\eu^{\frac{\pi \ramuno}{4}}\hat\Phi^{(\sigma_1)}(\hbar) &
H_{0,2}^-(q) &\= \left(\frac{q}{\tilde{q}}\right)^{-1/24}\frac{5}{6\tau}
\eu^{\frac{\pi \ramuno}{4}}\hat\Phi^{(\sigma_2)}(-\hbar) \\
H_{0,3}^+(q) &\= \left(\frac{q}{\tilde{q}}\right)^{1/24}\frac{1}{2}\eu^{-\frac{\pi \ramuno}{4}}\hat\Phi^{(\sigma_1)}(\hbar) &
H_{0,3}^-(q) &\= \left(\frac{q}{\tilde{q}}\right)^{-1/24} \frac{1}{2}\eu^{-\frac{\pi \ramuno}{4}}
\hat\Phi^{(\sigma_2)}(-\hbar) \\
H_{0,4}^+(q) &\= \tilde q ^{-\frac{7}{8}} \left(\frac{q}{\tilde{q}}\right)^{1/24}
2\eu^{-\frac{\pi \ramuno}{4}}\hat\Phi^{(\sigma_6)}(\hbar) &
H_{0,4}^-(q) &\= \tilde q ^{\frac{7}{8}} \left(\frac{q}{\tilde{q}}\right)^{-1/24}
2\eu^{-\frac{\pi \ramuno}{4}}\hat\Phi^{(\sigma_3)}(\hbar) \\
H_{0,5}^+(q) &\= \tilde q ^{-\frac{7}{8}} \left(\frac{q}{\tilde{q}}\right)^{1/24}
\eu^{-\frac{\pi \ramuno}{4}}\hat\Phi^{(\sigma_6)}(\hbar) &
H_{0,5}^-(q) &\= \tilde q ^{\frac{7}{8}} \left(\frac{q}{\tilde{q}}\right)^{-1/24}
\eu^{-\frac{\pi \ramuno}{4}}\hat\Phi^{(\sigma_3)}(\hbar) \,.
\end{aligned}
\ee

Here $\s_j$ for $j=1,\dots,6$ are the six roots of the polynomial~\eqref{6critpoints}
with the numerical values
\be
\label{s123}
\sigma_1 =-0.662-0.562 i, \qquad
\sigma_2=-0.662+0.562 i, \qquad
\sigma_3=1.325
\ee
corresponding to the field~\eqref{alpha23} and
\be
\label{s456}
\sigma_4 =-2.247, \qquad
\sigma_5=-0.555, \qquad
\sigma_6=0.802 \,,
\ee
corresponding to the field~\eqref{alpha49}, respectively.

Note that inserting the asymptotics~\eqref{qasy} to the quadratic
relation~\eqref{quadrel}, one simply obtains that $0=0$. 


\subsection{Further aspects}
\label{sub.further}

As we explained briefly in the introduction, matrix-valued holomorphic quantum
modular forms are complicated objects with conjectural analytic and arithmetic
properties. In the present paper, we focused on the algebraic aspects of these
objects. Our paper does not include the following analytic aspects of
the matrix-valued holomorphic quantum modular form of the $(-2,3,7)$-pretzel knot:
\begin{itemize}
\item
  Asymptotics of the state integral as $\tau$ tends to zero in a fixed ray.
  A detailed analysis of the corresponding thimbles will surely identify those
  asymptotics with $\BZ$-linear combinations of the series
  $\tq^{\lambda'}\hat\Phi(\lambda,\hbar)$.
\item
  Borel resummation of the factorially divergent series $\hat\Phi(\lambda,\hbar)$,
  and identification of their Stokes phenomenon in terms of the series
  $H_{\lambda,j}(q)$. Without doubt, the Borel resummation coincides, up to elementary
  factors, with the descendant state integral itself. 
\item
  Asymptotics of the $q$-series $H_{\lambda,j}(q)$ when $\tau$ tends to zero
  in a fixed ray. This can be deduced combining Theorem~\ref{thm.4} with the
  asymptotics of the state integrals themselves.
\end{itemize}

The paper also not include the arithmetic aspects related to the matrix of Habiro-like
elements. Those can be obtained by the factorization of the descendant state integral
~\eqref{Zdesc} at rational points, following~\cite{GK:evaluation}. 

\subsection*{Acknowledgements}

The authors would like to thank Zhihao Duan, Jie Gu and especially Campbell Wheeler
for enlightening conversations. 
\vspace{0.5cm}

\noindent
{\bf Conflict of interest.} To the best of all authors’ knowledge, the submitted
article has no conflict of interest.
 

\appendix

\section{Comparison of the $q$-series with~\cite{GZ:qseries}}
\label{sub.comparison}

Recall the $q$-series $H^+_k(q)$ and $H^-_k(q)$ for $|q| < 1$
from Equations (142) and (143) of~\cite{GZ:qseries}. When $k=0,1,2$,
the series $H^\pm_k(q)$ coincide with $H^\pm_{0,k}(q)$, whereas when $k=3,4,5$, they
are given by
{\tiny
\be
\label{eq.237Hb} 
\begin{aligned}
H^+_3(q) & \= \frac{(q^{3/2};q)_\infty^2}{(q;q)_\infty^2}
\sum_{m=0}^\infty \frac{q^{(2m+1)(m+1)}}{(q^{3/2};q)_m^2 (q;q)_{2m+1}}
& H^-_4(q) & \= \frac{(q;q)_\infty^2}{(-1;q)_\infty^2} 
\sum_{n=0}^\infty \frac{q^{n(n+1)}}{(-q;q)_n^2 (q;q)_{2n}}
\\
H^+_4(q) & \= \frac{(-q;q)_\infty^2}{(q;q)_\infty^2}
\sum_{m=0}^\infty \frac{q^{(2m+1)m}}{(-q;q)_m^2 (q;q)_{2m}}
& H^-_3(q) & \= \frac{(q;q)_\infty^2}{(q^{-1/2};q)_\infty^2} 
\sum_{n=0}^\infty \frac{q^{n(n+2)}}{(q^{3/2};q)_n^2 (q;q)_{2n+1}}
\\
H^+_5(q) & \= \frac{(-q^{3/2};q)_\infty^2}{(q;q)_\infty^2}
\sum_{m=0}^\infty \frac{q^{(2m+1)(m+1)}}{(-q^{3/2};q)_m^2 (q;q)_{2m+1}}
& H^-_5(q) & \= \frac{(q;q)_\infty^2}{(-q^{-1/2};q)_\infty^2} 
\sum_{n=0}^\infty \frac{q^{n(n+2)}}{(-q^{3/2};q)_n^2 (q;q)_{2n+1}} \,.
\end{aligned}
\ee
}


The comparison between the above series with the ones in our paper are
given as follows.

\begin{lemma}
\label{lem.compare}
We have:
\be
\label{eq.comparison} 
\begin{aligned}
H^+_{0,3}(q) & \= \frac{q^{1/8}}{(1-q^{1/2})^2}
\frac{(q;q)_\infty^2}{(q^{3/2};q)_\infty^2} H^+_3(q) &   
H^-_{0,3}(q) & \= \frac{q^{-1/8}}{(1-q^{-1/2})^2}
\frac{(q^{-1/2};q)_\infty^2}{(q;q)_\infty^2}  H_3^-(q)
\\
H^+_{0,4}(q) & \=  \frac{(q;q)_\infty^2}{(-q;q)_\infty^2} H^+_4(q) &   
H^-_{0,4}(q) & \=  \frac{(-1;q)_\infty^2}{(q;q)_\infty^2}  H_4^-(q)
\\
H^+_{0,5}(q) & \= \frac{q^{1/8}}{(1+q^{1/2})^2}
\frac{(q;q)_\infty^2}{(-q^{3/2};q)_\infty^2} H^+_5(q) &   
H^-_{0,5}(q) & \= \frac{q^{-1/8}}{(1+q^{-1/2})^2}
\frac{(-q^{-1/2};q)_\infty^2}{(q;q)_\infty^2}  H_5^-(q)
\end{aligned}  
\ee
\end{lemma}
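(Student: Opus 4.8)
The plan is to prove each of the six identities in~\eqref{eq.comparison} by direct comparison of the $q$-hypergeometric series, matching the summand of the series~\eqref{H345def} against that of~\eqref{eq.237Hb} term by term. For the $j=3,4,5$ cases with the ``$+$'' superscript, observe that the series $H^+_{0,j}(q)$ in~\eqref{H345def} (taken at $\lambda=0$) and the series $H^+_j(q)$ in~\eqref{eq.237Hb} have \emph{identical} summands: the only difference is the prefactor, which is $q^{1/8}/(1-q^{1/2})^2$ versus $(q^{3/2};q)_\infty^2/(q;q)_\infty^2$ and so on. Hence each ``$+$'' identity reduces to checking that the ratio of the two prefactors equals the claimed factor, which is a trivial identity among infinite Pochhammer symbols. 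For instance, for $j=3$ one needs
\be
\frac{q^{1/8}}{(1-q^{1/2})^2} \= \frac{q^{1/8}}{(1-q^{1/2})^2}\cdot\frac{(q;q)_\infty^2}{(q^{3/2};q)_\infty^2}\cdot\frac{(q^{3/2};q)_\infty^2}{(q;q)_\infty^2},
\ee
which holds tautologically; similarly $j=4,5$. So the ``$+$'' half of the lemma is immediate once one writes out the definitions.

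For the ``$-$'' superscript cases I would do the same bookkeeping, but one must first recall from~\eqref{H345def} that $H^-_{0,3}(q)$, $H^-_{0,4}(q)$, $H^-_{0,5}(q)$ are defined with $\lambda'=0$ substituted, so the summands again literally match those of $H^-_3(q)$, $H^-_4(q)$, $H^-_5(q)$ in~\eqref{eq.237Hb}. What remains is again a comparison of prefactors: e.g.\ for $j=3$ the prefactor in~\eqref{H345def} is $q^{-1/8}/(1-q^{-1/2})^2$ while in~\eqref{eq.237Hb} it is $(q;q)_\infty^2/(q^{-1/2};q)_\infty^2$, and the claimed conversion factor $q^{-1/8}/(1-q^{-1/2})^2 \cdot (q^{-1/2};q)_\infty^2/(q;q)_\infty^2$ then makes both sides agree trivially. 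The only genuine care needed is with the case $j=4$, $H^-$: here $(-1;q)_\infty$ appears, and since $(-1;q)_\infty = 2(-q;q)_\infty$ one should note that the factor $2$'s cancel between the two sides (the summand in~\eqref{eq.237Hb} has $(-q;q)_n^2$, matching~\eqref{H345def} exactly, while the prefactors differ by $(-1;q)_\infty^2/(q;q)_\infty^2$ versus $1$ times the conversion factor $(-1;q)_\infty^2/(q;q)_\infty^2$). No convergence subtleties arise since everything is a formal/convergent $q$-series for $|q|<1$.

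The ``main obstacle'' is really only notational: one must be vigilant that the series in~\cite[Eqns.~(142)--(143)]{GZ:qseries} use the normalization convention in which an overall infinite-product prefactor is attached, whereas~\eqref{H345def} uses the finite prefactor $q^{\pm1/8}/(1\pm q^{\pm1/2})^2$; reconciling the two amounts precisely to the conversion factors in~\eqref{eq.comparison}. I would therefore structure the proof as: (i) recall the two sets of definitions side by side; (ii) observe the summands coincide after setting $\lambda=\lambda'=0$; (iii) for each $j\in\{3,4,5\}$ and each sign, read off that the quotient of prefactors is exactly the stated factor. Steps (ii)--(iii) are a half-page of elementary manipulation of Pochhammer symbols and carry no real difficulty; there is nothing deeper to prove.

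\qed
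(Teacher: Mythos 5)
Your proposal is correct for the six identities as displayed: with $\lambda=\lambda'=0$ the sums in~\eqref{H345def} and~\eqref{eq.237Hb} are literally identical, so each identity in~\eqref{eq.comparison} is exactly the statement that the two prefactors differ by the stated conversion factor, and your unfolding of the definitions (including the harmless observation $(-1;q)_\infty=2(-q;q)_\infty$ in the $j=4$ case, which is not even needed since the conversion factor is just the reciprocal of the Pochhammer prefactor) settles this. However, the paper's own proof takes a genuinely different route: it does not stop at the tautological comparison but instead establishes the three product identities~\eqref{connection}, which couple the $q$-side and $\tilde q$-side conversion factors and evaluate their products in closed form as elementary expressions in $\tau$, $q^{1/8}$ and $\tilde q^{-1/8}$. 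Proving these requires the modular transformation of the Dedekind $\eta$-function, $(q;q)_\infty/(\tilde q;\tilde q)_\infty = e^{\pi i/4}(q/\tilde q)^{1/24}\tau^{-1/2}$, together with the splittings $(q^2;q^2)_\infty=(q;q)_\infty(-q;q)_\infty$ and $(q^{1/2};q^{1/2})_\infty=(q^{1/2};q)_\infty(q;q)_\infty$; this is the substantive content of the appendix, since it is what is actually needed to match the bilinear factorization~\eqref{eq:q-series} at $(\lambda,\lambda')=(0,0)$ with the factorization in~\cite[eq.~(52)]{GK:evaluation} term by term. In short, your argument buys the displayed lemma at essentially zero cost, while the paper's argument buys the nontrivial consistency of the two factorization formulas; if you intend your proof to serve the purpose for which the lemma is invoked in the text, you would still need to supply the $\eta$-modularity computation behind~\eqref{connection}.
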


\begin{proof}
We need to show the following identities:
\begin{equation}
\label{connection}
\begin{aligned}
\frac{(q^{3/2};q)_\infty^2}{(q;q)_\infty^2}\frac{(\tilde{q};\tilde{q})_\infty^2}{
(-1;\tilde{q})_\infty^2}&=
\frac{\eu^{-\frac{\pi \ramuno}{2}}q^{1/8}}{2(1-q^{1/2})^2} \tau,\\
\frac{(-q;q)_\infty^2}{(q;q)_\infty^2}\frac{(\tilde{q};\tilde{q})_\infty^2}{
(-\tilde{q}^{-1/2};\tilde{q})_\infty^2}&=
\frac{\eu^{-\frac{\pi \ramuno}{2}}\tilde q^{-1/8}}{2(1-\tilde{q}^{-1/2})^2} \tau,\\
\frac{(-q^{3/2};q)_\infty^2}{(q;q)_\infty^2}\frac{(\tilde{q};\tilde{q})_\infty^2}{
(-\tilde{q}^{-1/2};\tilde{q})_\infty^2}&=
\frac{\eu^{-\frac{\pi \ramuno}{2}}q^{1/8}\tilde{q}^{-1/8}}{
(1+q^{1/2})^2(1+\tilde{q}^{-1/2})^2}\tau. 
\end{aligned}
\end{equation}
The modularity of the Dedekind $\eta$-function implies that
\be
\frac{(q;q)_\infty}{(\tilde{q};\tq)_\infty}
=e^{\frac{\pi \ramuno}{4}}
\left( \frac{q}{\tilde{q}}\right)^{\frac{1}{24}}\frac{1}{\sqrt{\tau}} \,,
\ee
hence
\begin{equation*}
\begin{aligned}
  \frac{(q^{1/2};q^{1/2})_\infty}{(\tilde{q}^2;{\tilde{q}^2})_\infty}=&e^{\frac{\pi \ramuno}{4}}
  \frac{\tilde{q}^{1/12}}{q^{1/48}}
  \left({\sqrt{\frac{\tau}{2}}}\right)^{-1}.
\end{aligned}
\end{equation*}
Since 
\begin{equation*}
    \begin{aligned}
        (q^2;q^2)_\infty=&(q;q)_\infty (-q;q)_\infty\\
        (q^{1/2};q^{1/2})_\infty= &  (q^{1/2};q)_\infty(q;q)_\infty   
    \end{aligned}
\end{equation*} 
it follows that
\begin{equation*}
\begin{aligned}
  \frac{(q^{3/2};q)_{\infty}}{(q;q)_{\infty}}
  \frac{(\tilde{q};\tilde{q})_\infty}{(-1;\tilde{q})_\infty}=&
  \frac{1}{2(1-q^{1/2})}
  \frac{(q^{1/2};q)_{\infty}}{(-\tilde{q};\tilde{q})_{\infty}}
  \frac{(\tilde{q};\tilde{q})_\infty}{(q;q)_\infty}\\
  =& \frac{1}{2(1-q^{1/2})}
  \frac{(q^{1/2};q^{1/2})_{\infty}}
  {(\tilde{q}^2;\tilde{q}^2)_\infty}
  \frac{(\tilde{q};\tilde{q})_\infty^2}{(q;q)_{\infty}^2}
  \\
  =&\frac{1}{2(1-q^{1/2})}
  e^{-\frac{\pi\ramuno}{4}}q^{1/16}\sqrt{2\tau}.
\end{aligned}
\end{equation*}
Therefore 
\begin{equation*}
  \frac{(q^{3/2};q)_\infty^2}{(q;q)_\infty^2}
  \frac{(\tilde{q};\tilde{q})_\infty^2}{(-1;\tilde{q})_\infty^2}=
  \frac{\eu^{-\frac{\pi \ramuno}{2}}q^{1/8}}{2(1-q^{1/2})^2} \tau.
\end{equation*}
The proof for the rest two identities is similar.
\end{proof}

We end this appendix with a remark that the collection of $q$-hypergeometric
series $H^\pm_{\lambda,j}(q)$ defined and convergent for $|q|<1$ extend to $|q|>1$
and satisfy the symmetry 

\begin{equation}
\label{Hsym}
H_{\lambda,j}^+(q^{-1}) = (-1)^{\delta_j}H_{-\lambda,j}^-(q),\quad j=0,1,2 \qquad
(|q| \neq 1)
\end{equation}

This extension is possible since
\begin{itemize}
\item
  the terms of~\eqref{t012def} satisfy $t_{\lambda,m}(q^{-1}) = T_{-\lambda,m}(q)$,
\item
  the Eisenstein series $\mathcal{E}_1(q)$ and $\mathcal{E}_2(q)$ can be extended
  to $|q|>1$ satisfying $\mathcal{E}_j(q)=-\mathcal{E}_j(q^{-1}) $
  for $j=1,2$~\cite[Remark 19]{GK:evaluation},
\item
  consequently, the terms~\eqref{p012def} satisfy
  $p_{k,j,m}(q) = (-1)^j P_{-k,j,m}(q^{-1})$ for $j=0,1,2$. This follows from the
  identities
  \begin{equation}
    E_1^{(0)}(q) = \frac{1-\mathcal{E}_1(q)}{4} \qquad
      E_2^{(0)}(q) = \frac{1-\mathcal{E}_2(q)}{24},
  \end{equation}
  and the recursive relations ~\eqref{recE1m} and ~\eqref{recE2m}.
\end{itemize}
The symmetry for $j=3,4,5$ is obvious.



\bibliographystyle{hamsalpha}
\bibliography{biblio}

\providecommand{\bysame}{\leavevmode\hbox to3em{\hrulefill}\thinspace}
\providecommand{\href}[2]{#2}
\providecommand{\eprint}{\begingroup \urlstyle{rm}\Url}
\begin{thebibliography}{GGMnW}

\bibitem[AK14]{AK}
J{\o}rgen~Ellegaard Andersen and Rinat Kashaev, \emph{A {TQFT} from {Q}uantum
  {T}eichm\"uller theory}, Comm. Math. Phys. \textbf{330} (2014), no.~3,
  887--934.

\bibitem[BDP14]{holomorphic-blocks}
Christopher Beem, Tudor Dimofte, and Sara Pasquetti, \emph{Holomorphic blocks
  in three dimensions}, J. High Energy Phys. (2014), no.~12, 177, front
  matter+118.

\bibitem[CDW]{snappy}
Marc Culler, Nathan Dunfield, and Jeffrey Weeks, \emph{Snap{P}y, a computer
  program for studying the topology of $3$-manifolds}, Available at
  \url{http://snappy.computop.org} (30/01/2015).

\bibitem[Fad95]{Faddeev}
Ludwig Faddeev, \emph{Discrete {H}eisenberg-{W}eyl group and modular group},
  Lett. Math. Phys. \textbf{34} (1995), no.~3, 249--254.

\bibitem[GGMn21]{GGM:resurgent}
Stavros Garoufalidis, Jie Gu, and Marcos Mari\~{n}o, \emph{The resurgent
  structure of quantum knot invariants}, Comm. Math. Phys. \textbf{386} (2021),
  no.~1, 469--493.

\bibitem[GGMn23]{GGM:peacock}
\bysame, \emph{Peacock patterns and resurgence in complex {C}hern-{S}imons
  theory}, Res. Math. Sci. \textbf{10} (2023), no.~3, Paper No. 29, 67.

\bibitem[GGMnW]{GGMW:trivial}
Stavros Garoufalidis, Jie Gu, Marcos Mari\~no, and Campbell Wheeler,
  \emph{Resurgence of {C}hern-{S}imons theory at the trivial flat connection},
  Preprint 2021, \href{https://arxiv.org/abs/2111.04763}{arXiv:2111.04763}.

\bibitem[GK13]{IrreducibilityQDiff}
Stavros Garoufalidis and Christoph Koutschan, \emph{Irreducibility of
  {$q$}-difference operators and the knot {$7_4$}}, Algebr. Geom. Topol.
  \textbf{13} (2013), no.~6, 3261--3286.

\bibitem[GK15]{GK:evaluation}
Stavros Garoufalidis and Rinat Kashaev, \emph{Evaluation of state integrals at
  rational points}, Commun. Number Theory Phys. \textbf{9} (2015), no.~3,
  549--582.

\bibitem[GK17]{GK:qseries}
\bysame, \emph{From state integrals to {$q$}-series}, Math. Res. Lett.
  \textbf{24} (2017), no.~3, 781--801.

\bibitem[GW]{GW:qmod}
Stavros Garoufalidis and Campbell Wheeler, \emph{Modular $q$-holonomic
  modules}, Preprint 2022,
  \href{https://arxiv.org/abs/2203.17029}{arXiv:2203.17029}.

\bibitem[GZ23]{GZ:qseries}
Stavros Garoufalidis and Don Zagier, \emph{Knots and their related
  {$q$}-series}, SIGMA Symmetry Integrability Geom. Methods Appl. \textbf{19}
  (2023), Paper No. 082.

\bibitem[GZ24]{GZ:kashaev}
\bysame, \emph{Knots, perturbative series and quantum modularity}, SIGMA
  Symmetry Integrability Geom. Methods Appl. \textbf{20} (2024), Paper No. 055.

\bibitem[Kas97]{K97}
Rinat Kashaev, \emph{The hyperbolic volume of knots from the quantum
  dilogarithm}, Lett. Math. Phys. \textbf{39} (1997), no.~3, 269--275.

\bibitem[MM01]{MM}
Hitoshi Murakami and Jun Murakami, \emph{The colored {J}ones polynomials and
  the simplicial volume of a knot}, Acta Math. \textbf{186} (2001), no.~1,
  85--104.

\bibitem[Olv74]{Olver}
Frank Olver, \emph{Asymptotics and special functions}, Computer Science and
  Applied Mathematics, Academic Press [Harcourt Brace Jovanovich, Publishers],
  New York-London, 1974.

\bibitem[RT90]{RT:ribbon}
Nikolai Reshetikhin and Vladimir Turaev, \emph{Ribbon graphs and their
  invariants derived from quantum groups}, Comm. Math. Phys. \textbf{127}
  (1990), no.~1, 1--26.

\bibitem[Thu77]{Th}
William Thurston, \emph{The geometry and topology of 3-manifolds},
  Universitext, Springer-Verlag, Berlin, 1977, Lecture notes, Princeton.

\bibitem[Whe23]{Wheeler:thesis}
Campbell Wheeler, \emph{Modular {$q$}-difference equations and quantum
  invariants of hyperbolic three-manifolds}, 2023, Thesis (Ph.D.)--University
  of Bonn.

\bibitem[WZ92]{WZ}
Herbert~S. Wilf and Doron Zeilberger, \emph{An algorithmic proof theory for
  hypergeometric (ordinary and ``{$q$}'') multisum/integral identities},
  Invent. Math. \textbf{108} (1992), no.~3, 575--633.

\end{thebibliography}
\end{document}